\documentclass[11pt]{article}
\usepackage[T1]{fontenc}
\usepackage[utf8]{inputenc}
\usepackage{lmodern}
\usepackage[a4paper,margin=28mm]{geometry}
\usepackage{amsmath,amssymb,amsthm,mathtools}
\usepackage{booktabs,array,microtype}
\usepackage{xcolor}
\usepackage[colorlinks=true,linkcolor=blue!50!black,citecolor=blue!50!black,urlcolor=blue!50!black]{hyperref}
\usepackage[nameinlink,noabbrev]{cleveref}
\numberwithin{equation}{section}
\newtheorem{theorem}{Theorem}[section]
\newtheorem{lemma}[theorem]{Lemma}
\newtheorem{proposition}[theorem]{Proposition}
\newtheorem{corollary}[theorem]{Corollary}
\theoremstyle{definition}
\newtheorem{definition}[theorem]{Definition}
\theoremstyle{remark}

\newcommand{\E}{\mathbb E}
\newcommand{\Pbb}{\mathbb P}
\newcommand{\R}{\mathbb R}
\newcommand{\eps}{\varepsilon}

\newcommand{\kopt}{\kappa_{\mathrm{opt}}}
\newcommand{\kan}{\kappa_{\mathrm{an}}}
\newcommand{\tauopt}{\tau_{\mathrm{opt}}}
\newcommand{\tauan}{\tau_{\mathrm{an}}}
\allowdisplaybreaks[2]
\title{Moment Constants for Brownian Affine Approximation}
\author{Yue Chen\thanks{Email:
\href{mailto:chenyue20020601@gmail.com}{\texttt{chenyue20020601@gmail.com}}}}
\date{}
\begin{document}
\maketitle
\begin{abstract}
We evaluate moment constants for the times during which Brownian motion
admits uniform approximation by an unrestricted affine function or by a
line through the origin. The first four moments are rational linear
combinations of $1$ and $\zeta(2j+1)/\pi^{2j}$, $1\le j\le4$;
the means are $40\zeta(3)/\pi^2-4/3$ and $21\zeta(3)/\pi^2$,
respectively. Slope integration and a common contour argument give the
unrestricted moments, while time inversion relates the anchored law to
the reciprocal of Kingman's absorption time. For both models we also
give exponential-integral series for the standardized absolute third
central moments, with explicit truncation bounds.
\end{abstract}
\section{Introduction and main results}
\label{intro:section}

Throughout, $W=(W_t)_{t\ge0}$ denotes standard real Brownian motion,
with $W_0=0$ and $\E W_t^2=t$. For $f\in C[0,T]$ with $f(0)=0$, define
\begin{align}
 D_{\mathrm{opt}}(f,T)
  &=\inf_{a,b\in\R}\sup_{0\le t\le T}|f(t)-at-b|,
  \label{def:optimal}\\
 D_{\mathrm{an}}(f,T)
  &=\inf_{a\in\R}\sup_{0\le t\le T}|f(t)-at|.
 \label{def:anchored}
\end{align}
For $T>0$, both infima are attained. Indeed, bounds on the residuals
at $0$ and $T$ bound all fitting coefficients, so every nonempty error
sublevel set in coefficient space is compact.

\begin{definition}
\label{def:lifetimes}
For $A\in\{\mathrm{opt},\mathrm{an}\}$, let
\begin{equation}
 \tau_A=\inf\{t>0:D_A(W,t)>1\},\qquad
 m_{q,A}=\E\tau_A^q\quad(q>0),\qquad \kappa_A=m_{1,A}.
 \label{def:times}
\end{equation}
The associated dispersion constants are
\begin{equation}
 V_A=m_{2,A}-\kappa_A^2,\qquad
 \alpha_A=\frac{V_A}{\kappa_A^2},\qquad
 \beta_A=\frac{\E|\tau_A-\kappa_A|^3}{V_A^{3/2}}.
 \label{def:dispersion}
\end{equation}
We write $S_A(t)=\Pbb(\tau_A>t)$ for the survival function and
$\zeta(s)=\sum_{n\ge1}n^{-s}$ for $s>1$.
\end{definition}

The unrestricted time is $\tau_{1,0,\infty}$ in the notation of
Creutzig, M\"uller-Gronbach, and Ritter~\cite{CMR2007}; their approximation
constant satisfies $c_{1,0,\infty}=\kopt^{-1/2}$.

We evaluate the first four moments of both approximation times.
For unrestricted approximation, integration over slopes and known
Brownian extremum distributions \cite{SchehrLeDoussal2010,Mori2020}
give an explicit survival series and reduce the four moments to a
common application of the residue theorem, including the nonzero
boundary term in the mean. For anchored approximation, the classical
wedge probability \cite{YcartDrouilhet2016} identifies the lifetime
with a scaled reciprocal of the Kingman absorption time
\cite{MoehlePitters2015}. All positive integer moments of the anchored
lifetime are evaluated as finite combinations of odd zeta values.
For both models, we also
give exact exponential-integral series for the standardized absolute
third moments, with explicit truncation bounds. Proofs of the
distribution identities needed for these evaluations are included.

\begin{theorem}[Moment constants]
\label{const:evaluations}
For unrestricted affine approximation,
\begin{align}
 \kopt&=\frac{40\zeta(3)}{\pi^2}-\frac43,
 \notag\\
 m_{2,\mathrm{opt}}&=\frac{2240\zeta(5)}{\pi^4}
                       -\frac{224\zeta(3)}{3\pi^2},
 \label{const:optimal-moments}\\
 m_{3,\mathrm{opt}}&=\frac{120960\zeta(7)}{\pi^6}
                 -\frac{5440\zeta(5)}{\pi^4}
                 +\frac{64\zeta(3)}{3\pi^2},
 \notag\\
 m_{4,\mathrm{opt}}&=\frac{7096320\zeta(9)}{\pi^8}
                 -\frac{365568\zeta(7)}{\pi^6}
                 +\frac{3072\zeta(5)}{\pi^4}.
 \notag
\end{align}
For approximation by a line through the origin,
\begin{align}
 \kan&=\frac{21\zeta(3)}{\pi^2},
 \notag\\
 m_{2,\mathrm{an}}&=\frac{930\zeta(5)}{\pi^4}
                         -\frac{14\zeta(3)}{\pi^2},
 \label{const:anchored-moments}\\
 m_{3,\mathrm{an}}&=\frac{40005\zeta(7)}{\pi^6}
                         -\frac{930\zeta(5)}{\pi^4},
 \notag\\
 m_{4,\mathrm{an}}&=\frac{1931580\zeta(9)}{\pi^8}
                         -\frac{53340\zeta(7)}{\pi^6}
                         +\frac{124\zeta(5)}{\pi^4}.
 \notag
\end{align}
The corresponding variances are
\begin{align}
 V_{\mathrm{opt}}
  &=\frac{2240\zeta(5)-1600\zeta(3)^2}{\pi^4}
      +\frac{32\zeta(3)}{\pi^2}-\frac{16}{9},
 \label{const:optimal-variance}\\
 V_{\mathrm{an}}
  &=\frac{930\zeta(5)-441\zeta(3)^2}{\pi^4}
      -\frac{14\zeta(3)}{\pi^2}.
 \label{const:anchored-variance}
\end{align}
In both cases, $\alpha_A=V_A/\kappa_A^2$ and
$\kappa_A/\alpha_A=\kappa_A^3/V_A$.
\end{theorem}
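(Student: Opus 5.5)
The plan has two independent parts, one per model. In each we first derive an explicit survival function $S_A(t)$, then obtain $m_{q,A}=q\int_0^\infty t^{q-1}S_A(t)\,dt$ in closed form. The variances and the identities for $\alpha_A$ and $\kappa_A/\alpha_A$ then follow by algebra: $V_A=m_{2,A}-\kappa_A^2$, with $\kappa_A^2$ expanded.

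\textbf{Anchored model.} By Brownian scaling, $D_{\mathrm{an}}(W,t)\overset{d}{=}\sqrt t\,D_{\mathrm{an}}(W,1)$. Hence $\tau_{\mathrm{an}}\overset{d}{=}D_{\mathrm{an}}(W,1)^{-2}$, provided $t\mapsto D_{\mathrm{an}}(W,t)$ is nondecreasing, which it is because the supremum runs over a growing interval. The event $\{D_{\mathrm{an}}(W,1)\le x\}$ says that some line $at$ keeps $W$ inside the strip $|W_t-at|\le x$ on $[0,1]$.

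Time inversion $W_t\mapsto tW_{1/t}$ maps this strip to a wedge $\{|B_s-a|\le xs,\ s\ge1\}$. Integrating over the free slope $a$ and using the classical wedge probability \cite{YcartDrouilhet2016}, we get a theta-type series for $S_{\mathrm{an}}$. We then match it with the known distribution of the Kingman absorption time $T_K=\sum_{k\ge2}E_k/\binom k2$, whose Laplace transform is $\prod_{k\ge2}\binom k2/(\binom k2+\lambda)$, so that $\tau_{\mathrm{an}}\overset{d}{=}c/T_K$ for an explicit scale $c$. Negative moments of $T_K$ follow by partial fractions: the density of $T_K$ is $\sum_{k\ge2}(-1)^k(2k-1)\binom k2 e^{-\binom k2 t}$, up to the standard coefficients in \cite{MoehlePitters2015}. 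Therefore
\[
\E T_K^{-q}=\frac{1}{\Gamma(q)}\int_0^\infty \lambda^{q-1}\E e^{-\lambda T_K}\,d\lambda,
\]
and we evaluate this by residues of the product at $\lambda=-\binom k2$. For integer $q$ this gives alternating sums $\sum_k(-1)^k(2k-1)\binom k2^{-(q-1)}\cdots$. Partial fractions in $k$ then reduce these to the Dirichlet $\eta$-values $(1-2^{1-s})\zeta(s)$ at odd $s$. This explains the factors $21=\tfrac34\cdot 28$, $930$, $40005$, and so on.

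\textbf{Unrestricted model.} Scaling again gives $\tau_{\mathrm{opt}}\overset{d}{=}D_{\mathrm{opt}}(W,1)^{-2}$. With the slope $a$ fixed, the optimal intercept gives error $\tfrac12\operatorname{range}_{[0,1]}(W_t-at)$. So $D_{\mathrm{opt}}\le x$ if and only if some drifted Brownian motion has range at most $2x$. Following \cite{SchehrLeDoussal2010,Mori2020}, we write the joint law of the maximum and minimum of $W_t-at$ as an eigenfunction series in $\sin(n\pi y/2x)e^{-n^2\pi^2/(8x^2)}$. Integrating over $a$ by Gaussian integration converts this into a survival series $S_{\mathrm{opt}}(t)=\sum_n c_n(t)e^{-n^2\pi^2 t/8}$, with coefficients polynomial or rational in $n$ and $t$. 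The existence of a slope $a$ is handled by convexity: the set of admissible $a$ is an interval, so we count its endpoints, which is the slope-integration trick.

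Termwise integration $\int_0^\infty t^{q-1}e^{-n^2\pi^2 t/8}dt$ then yields $\sum_n$ rational$(n)\cdot n^{-2q}$. We rewrite this as a contour integral of $\pi\cot(\pi z)$ (or $\pi/\sin$) times a rational function. The residue theorem turns it into $\zeta(2j+1)$ terms from the higher-order pole at $0$ or from odd-only sums, plus rational terms.

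\textbf{Main obstacle.} The hardest step is the unrestricted mean. There the series in $t$ is not absolutely integrable near $t=0$ termwise, so interchanging sum and integral leaves a boundary contribution. We expect this contribution to be exactly the $-\tfrac43$ term. We would first regularize with $\int_\eps^\infty$, sum the series via the contour representation before letting $\eps\to0$, and identify the limit of the arc or boundary term. Finally, we would check that the four resulting expressions match the stated rationals, using $\zeta(2j)$ cancellations.
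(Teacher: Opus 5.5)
Your structural reductions are the right ones and agree with the paper: scaling to $D_A(W,1)^{-2}$, time inversion to a wedge with $\tauan\overset{d}{=}4/T_{\mathrm K}$, and slope counting with the Gaussian endpoint weight. One caution: counting endpoints needs $|R_g'(a)|=|T_+-T_-|$, so you need the joint law of the extremal \emph{times} and of $W_1$, not only of the maximum and minimum.

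The gap is in the step that extracts the constants. In the unrestricted model the slope integration leaves the weight $\sqrt{2\pi}\,e^{W_1^2/2}$. In the paper's variables this is $T^{-1/2}e^{(b-a)^2/(2T)}$ with $t=4T$, and it does not disappear in a spectral expansion. The coefficient of $e^{-n^2\pi^2t/8}$ is an integral of $e^{2(b-a)^2/t}$ against functions of $(a,b)$ that do not vanish at the corner $(a,b)=(0,1)$; there $\widehat Q_{0,1}(k)=8f(k)/\sinh k$ still has a double pole at $k=i\pi n$. So the coefficient is neither polynomial nor rational in $t$. It grows like $e^{2/t}$, up to a power of $t$, as $t\downarrow0$, so termwise integration against $t^{q-1}$ diverges for \emph{every} $q$, not only for the mean. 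The anchored model shows this explicitly: Jacobi's transformation of $\prod(1-e^{-2ns})^3$ gives $S_{\mathrm{an}}(t)=(\pi t/2)^{3/2}e^{1/(2t)}\sum_{n\ge0}(-1)^n(2n+1)e^{-\pi^2(2n+1)^2t/8}$.

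Even granting an interchange, residue calculus with $\pi\cot(\pi z)$ times a rational function evaluates two-sided sums, which see only the even part. It returns powers of $\pi$ and trigonometric values at the poles, never $\zeta(2j+1)/\pi^{2j}$.

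The missing idea is to integrate in $T$ \emph{first}. The positive identity $T^{-1/2}e^{d^2/(2T)}=\pi^{-1/2}\int_0^\infty\lambda^{-1/2}\cosh(d\sqrt{2\lambda})e^{-\lambda T}\,d\lambda$ and Tonelli give $m_q=2^{2q-1}q\int_0^\infty L_q(k)\,dk$, with $L_q$ an explicit polynomial in $f(k)=\coth k-1/k$ and $1/k$. The odd zeta values then come from the \emph{sum over the poles} $k=i\pi n$, whose residues are odd powers $(i\pi n)^{-(2j+1)}$. On rectangles with top edge $\operatorname{Im}k=\pi(N+\tfrac12)$, the only surviving boundary term is $\int_{\R}(2\tanh^4x-2\tanh^2x)\,dx=-\tfrac43$, at $q=1$. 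That is the source of the $-\tfrac43$ in $\kopt$, not an arc at $t=0$.

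In the anchored model, your formula $\E T_{\mathrm K}^{-q}=\Gamma(q)^{-1}\int_0^\infty\lambda^{q-1}\E e^{-\lambda T_{\mathrm K}}\,d\lambda$ is also the paper's starting point, after rescaling $T_{\mathrm K}=2S$. It cannot, however, be evaluated through the poles $\lambda=-\binom k2$. The residues $(-1)^k(2k-1)\binom k2$ grow like $k^3$, so any residue series built from them diverges. Each partial-fraction piece decays only like $\lambda^{-1}$, so it is not integrable against $\lambda^{q-1}$ for $q\ge1$.

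The sums you display, $\Gamma(q)\sum_{k\ge2}(-1)^k(2k-1)\binom k2^{-(q-1)}$, are in fact the \emph{positive} moments $\E T_{\mathrm K}^{q-1}$. For $q=2$ the sum is $2=\E T_{\mathrm K}$, and for $q=1$ it diverges. These moments are polynomials in $\pi^2$, e.g.\ $\E T_{\mathrm K}^2=4\pi^2/3-8$, not combinations of $\zeta(2j+1)/\pi^{2j}$.

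What is needed is to unfold the square root in $L_S(z)=\pi z/\cos(\frac\pi2\sqrt{1-4z})$. With $x=\sqrt{4z-1}$, the moment becomes an integral of $x(x^2+1)^q/\cosh(\pi x/2)$ along the path $i\to0\to\infty$. The zero of $\cosh(\pi x/2)$ at $x=i$ is cancelled by $(x^2+1)^q$, so the path can be moved to $x=y+i$, giving $\int_0^\infty P_q(y)/\sinh(\pi y/2)\,dy$. The geometric expansion of $1/\sinh$ then evaluates each monomial $y^{2j}$ as $2(2j)!(2^{2j+1}-1)\zeta(2j+1)/\pi^{2j+1}$. The variance and $\alpha_A$ identities then follow by the algebra you describe.
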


The raw moments in \eqref{const:optimal-moments} and
\eqref{const:anchored-moments} are evaluated in
Sections~\ref{opt:section} and~\ref{an:section}, respectively.
The variance formulas follow by subtracting the squared means.

The fourth central moments are obtained without further integration:
\begin{equation}
 \mu_{4,A}:=\E(\tau_A-\kappa_A)^4
 =m_{4,A}-4\kappa_A m_{3,A}+6\kappa_A^2m_{2,A}-3\kappa_A^4.
 \label{const:fourth-central}
\end{equation}
\begin{table}[htbp]
\centering
\caption{Values obtained from the exact formulas and convergent expressions.}
\label{tab:constants}
\begin{tabular}{lrr}
\toprule
 & Unrestricted affine & Through the origin \\
\midrule
$\kappa$ & 3.538419796 & 2.557670393 \\
$m_2$ & 14.751044357 & 8.194812603 \\
$m_3$ & 71.557443239 & 32.059160830 \\
$m_4$ & 398.663120444 & 149.353361971 \\
$\mu_4$ & 23.714827828 & 14.632593213 \\
$V$ & 2.230629704 & 1.653134764 \\
$\alpha$ & 0.178159411 & 0.252708067 \\
$\kappa/\alpha$ & 19.860976021 & 10.121047654 \\
$\beta$ & 1.860139680 & 1.942855459 \\
\bottomrule
\end{tabular}
\end{table}

The standardized absolute third moments in Table~\ref{tab:constants}
are evaluated by the exponential-integral series in
Theorem~\ref{abs:series}; their truncation bounds are given in
Proposition~\ref{abs:truncation}.

Section~\ref{def:section} collects the common scaling and moment identities.
Sections~\ref{opt:section} and~\ref{an:section} derive the distribution
representations and evaluate the raw moments for the two models.
Section~\ref{abs:section} treats their absolute central moments together.
The boundary-variation proof of the ordered-extremum density is given
in Appendix~\ref{app:ordered-density}.

\section{Preliminaries}
\label{def:section}

\begin{lemma}[Order and scaling]
\label{def:scaling}
The error functions are continuous and nondecreasing in $T$. On the same
Brownian path,
\begin{equation}
 \tauan\le\tauopt.
 \label{def:order}
\end{equation}
Moreover,
\begin{equation}
 \tau_A\overset d=D_A(W,1)^{-2}.
 \label{def:inverse-error}
\end{equation}
For $\eps,\sigma>0$ and $d\in\R$, the first time that the corresponding
error of $dt+\sigma W_t$ exceeds $\eps$ has law
$(\eps^2/\sigma^2)\tau_A$.
\end{lemma}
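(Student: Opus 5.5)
The plan is to prove the four assertions in order: monotonicity and continuity, the ordering, the scaling identity, and the drift/diffusion transformation.

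\emph{Monotonicity and continuity.} Fix $f\in C[0,\infty)$ with $f(0)=0$ and write $E(a,b,T)=\sup_{0\le t\le T}|f(t)-at-b|$. For fixed $(a,b)$ this is nondecreasing in $T$, so taking the infimum preserves this and $D_A(f,\cdot)$ is nondecreasing.

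For continuity, fix a compact interval $[T_0,T_1]$ with $T_0>0$. By the compactness remark after \eqref{def:anchored}, the relevant coefficients can be restricted to a bounded set $K$ uniformly in $T$. Indeed, the residuals at $0$ and $T$ are at most $D_A(f,T_1)+1\le C$, so $|b|\le C$ and $|a|\le 2C/T_0$. On $K\times[T_0,T_1]$ the map $(a,b,T)\mapsto E(a,b,T)$ is jointly continuous, since $f$ is uniformly continuous on $[0,T_1]$ and $K$ is bounded. The infimum of a jointly continuous function over a compact set, taken as a function of the remaining parameter, is continuous.

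Near $T=0$ I would use the trivial bound $D_A(f,T)\le\sup_{t\le T}|f(t)|\to0$, obtained by taking $a=b=0$. This gives continuity at $0$ with $D_A(f,0)=0$.

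\emph{Ordering.} The anchored class is the subclass $b=0$ of the unrestricted class, so $D_{\mathrm{opt}}(f,T)\le D_{\mathrm{an}}(f,T)$. Hence the set $\{t:D_{\mathrm{opt}}>1\}$ is contained in $\{t:D_{\mathrm{an}}>1\}$, and \eqref{def:order} follows.

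\emph{Scaling identity.} Scaling is the key step. For $c>0$ and $g(t)=f(ct)$, the substitution $s=ct$ together with the reparametrisation $a\mapsto a/c$ gives $D_A(g,T)=D_A(f,cT)$. Brownian scaling says $W_{ct}\overset d=\sqrt c\,W_t$ as processes, and $D_A$ is positively homogeneous in $f$. Therefore
\[
D_A(W,t)=D_A\bigl(W_{t\,\cdot},1\bigr)\overset d=\sqrt t\,D_A(W,1)
\]
for each fixed $t$.

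By monotonicity and continuity, $\{\tau_A>t\}=\{D_A(W,t)\le1\}$. This needs care at the boundary, and I would handle it this way. If $D_A(W,t)\le1$, then $\tau_A\ge t$. If $D_A(W,t)<1$, then by continuity $\tau_A>t$. The event $D_A(W,t)=1$ has probability zero, because $D_A(W,1)$ has a continuous law.

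I expect this boundary point to be the main obstacle. First I would argue that $\Pbb(D_A(W,1)=x)=0$ for every $x$, using the scaling just proved. For $x>0$, scaling gives $\Pbb(D_A(W,t)=1)=\Pbb(D_A(W,1)=t^{-1/2})$. The events $\{D_A(W,1)=t^{-1/2}\}$ are disjoint in $t$, so at most countably many $t$ have positive mass. At the remaining $t$ the identity $\Pbb(\tau_A>t)=\Pbb(D_A(W,1)^{-2}>t)$ holds, and right-continuity of both survival functions extends it to all $t$. This proves \eqref{def:inverse-error}. Finiteness of $\tau_A$ needs $D_A(W,1)>0$ almost surely, which holds because Brownian paths are not affine on $[0,1]$ almost surely.

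\emph{Drift and diffusion.} Let $X_t=dt+\sigma W_t$. The linear term $dt$ is absorbed by shifting $a\mapsto a+d$, which is allowed in both classes. Homogeneity then gives $D_A(X,t)=\sigma D_A(W,t)$. The first time this exceeds $\eps$ is the first time $D_A(W,t)>\eps/\sigma$.

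Set $c=\eps^2/\sigma^2$ and $\tilde W_s=c^{-1/2}W_{cs}$, which is again a standard Brownian motion. Then $D_A(W,cs)=\sqrt c\,D_A(\tilde W,s)$, so this first time equals $c$ times the variable $\tau_A$ built from $\tilde W$. Its law is therefore that of $(\eps^2/\sigma^2)\tau_A$.
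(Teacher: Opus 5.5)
Your proof is correct, and its skeleton matches the paper's:
\begin{itemize}
\item restriction of the interval gives monotonicity;
\item the subclass $b=0$ gives the ordering;
\item the reparametrization $D_A(f,T)=D_A(f(T\,\cdot),1)$, combined with Brownian scaling, gives the scaling identity;
\item invariance under adding linear functions plus homogeneity gives the final claim.
\end{itemize}
Two technical steps differ from the paper.

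\textbf{Continuity.} The paper notes that $D_A(\cdot,1)$ is $1$-Lipschitz in the uniform norm. Continuity of $T\mapsto D_A(f(T\,\cdot),1)$ on $[0,\infty)$, including at $T=0$, is then immediate. Your compact-coefficient argument also works. One small correction: $|b|\le C$ and $|f(T)-aT-b|\le C$ give $|a|\le(2C+\max_{[0,T_1]}|f|)/T_0$, not $2C/T_0$. This is harmless.

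\textbf{Identifying the law.} The boundary difficulty you call the main obstacle disappears if you use closed tails. Monotonicity and left-continuity give the exact pathwise identity $\{\tau_A\ge t\}=\{D_A(W,t)\le1\}$. Hence $\Pbb(\tau_A\ge t)=\Pbb(D_A(W,1)^{-2}\ge t)$ for every $t>0$, which determines the law with no null sets to discard.

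In your open-tail version, the displayed identity $\{\tau_A>t\}=\{D_A(W,t)\le1\}$ holds only up to the event $\{D_A(W,t)=1\}$, as you note. However, your claim that $D_A(W,1)$ has a continuous law is not proved by the argument that follows. You show only that $\Pbb(D_A(W,t)=1)>0$ for at most countably many $t$, i.e.\ that $D_A(W,1)$ has countably many atoms, which is true of every law. Atomlessness is not available at this stage; the paper obtains it only later, from the explicit survival-function representations.

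Your concluding step does not need it. Agreement of the two survival functions off a countable set, together with right-continuity of both, gives agreement everywhere. So the argument is complete once the unsupported sentence about a continuous law is deleted.
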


\begin{proof}
Restriction of the interval gives monotonicity. Reparametrization by
$t=Ts$ gives $D_A(f,T)=D_A(f(T\,\cdot),1)$ for each rule.
As functions of the path in the uniform norm, $D_{\mathrm{opt}}$ and
$D_{\mathrm{an}}$ are 1-Lipschitz.
This proves continuity in $T$, including at zero. Optimizing over the
intercept gives $D_{\mathrm{opt}}\le D_{\mathrm{an}}$, proving
\eqref{def:order}.

Continuity and monotonicity give the pathwise equality
$\{\tau_A\ge t\}=\{D_A(W,t)\le1\}$. Brownian scaling gives
$D_A(W,t)\overset d=\sqrt t\,D_A(W,1)$, which proves
\eqref{def:inverse-error} by comparison of closed tails.
Each error is invariant under addition of a linear function and homogeneous
under multiplication of the path by a positive constant. Combining these
properties with Brownian scaling proves the last assertion.
\end{proof}

\begin{lemma}[Finiteness and nondegeneracy]
\label{def:finiteness}
Each $\tau_A$ is positive almost surely, has an exponential moment in a
neighborhood of zero, and is nondegenerate. In particular, all the constants
in \eqref{def:dispersion} are finite, and $V_A>0$.
\end{lemma}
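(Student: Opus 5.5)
We deduce all three assertions from Lemma~\ref{def:scaling}, using the order $\tauan\le\tauopt$ and the representation $\tau_A\overset d=D_A(W,1)^{-2}$.

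\emph{Positivity.} Both errors are continuous in $T$ and vanish at $T=0$, since $D_A(W,0)=0$. Hence $D_A(W,t)\le1$ for all small $t$, and so $\tau_A>0$ almost surely. Equivalently, $D_A(W,1)<\infty$ almost surely, because $D_A(W,1)\le\sup_{t\le1}|W_t|$ (take $a=b=0$).

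\emph{Exponential moment.} By \eqref{def:order} it suffices to treat $\tauopt$. The plan is to show that $D_{\mathrm{opt}}(W,t)$ exceeds $1$ quickly with exponentially high probability. Split $[0,t]$ into blocks $[k,k+1]$ for $k=0,\dots,\lfloor t\rfloor-1$. If $\tauopt>n$, there is a single affine function $g$ with $|W-g|\le1$ on $[0,n]$. Restricted to any block, the centred increment $W_{k+s}-W_k$ is then within $2$ of the affine function $s\mapsto g(k+s)-g(k)$, so $D_{\mathrm{opt}}(W_{k+\cdot}-W_k,1)\le2$ for every block. The increments on disjoint blocks are independent standard Brownian motions, so
\[
\Pbb(\tauopt>n)\le p^n,\qquad p:=\Pbb\bigl(D_{\mathrm{opt}}(W,1)\le2\bigr).
\]
It remains to check $p<1$, i.e.\ that $D_{\mathrm{opt}}(W,1)>2$ with positive probability. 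This is the main (mild) obstacle. One route is a support-theorem argument: $W$ stays within a small tube around a deterministic path such as $f(s)=100\sin(2\pi s)$ with positive probability, and $D_{\mathrm{opt}}(f,1)$ is large because $f(0)=f(1/2)=f(1)=0$ while $f(1/4)=100$, $f(3/4)=-100$. The $1$-Lipschitz property then gives $D_{\mathrm{opt}}(W,1)>2$ on that tube. Once $p<1$, the tail is geometric, so $\E e^{\lambda\tauopt}<\infty$ for $\lambda<-\log p$. The same bound holds for $\tauan$.

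\emph{Nondegeneracy.} We must show $\tau_A$ is not almost surely constant. Equivalently, $D_A(W,1)$ takes values both below and above any given $c>0$ with positive probability. The support argument gives this directly. A tube of radius $\delta$ around the zero path yields $D_A(W,1)\le\delta$, while a tube around a scaled sine as above yields arbitrarily large $D_A(W,1)$ (for $D_{\mathrm{an}}$ use $D_{\mathrm{an}}\ge D_{\mathrm{opt}}$). Hence the law of $D_A(W,1)$ has support unbounded above and containing points arbitrarily close to $0$, so it is not a point mass. By \eqref{def:inverse-error}, $\tau_A$ is therefore nondegenerate.

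\emph{Consequences.} Exponential moments give finiteness of all $m_{q,A}$, hence of $\kappa_A$, $V_A$ and $\E|\tau_A-\kappa_A|^3$. Positivity gives $\kappa_A>0$. Nondegeneracy together with the finite second moment gives $V_A>0$. Therefore $\alpha_A$ and $\beta_A$ are well defined and finite.
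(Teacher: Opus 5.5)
Your proof is correct. Its architecture matches the paper's:
\begin{itemize}
\item positivity from continuity of the error at $T=0$;
\item a geometric tail from independent blocks;
\item nondegeneracy from a small-ball event set against an event that forces a large error.
\end{itemize}

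The genuine difference is the per-block test. You use the whole block functional $\{D_{\mathrm{opt}}(W_{k+\cdot}-W_k,1)\le2\}$. You must then certify $p<1$ by the support theorem (Cameron--Martin shift plus small balls), and you invoke the support theorem again for the large-error side of nondegeneracy.

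The paper instead projects each block onto a single Gaussian second difference,
\[
Z_j=\tfrac12\bigl(W_{16j+8}-\tfrac12(W_{16j}+W_{16j+16})\bigr),
\]
which annihilates every affine function. Independence of the $Z_j$ is then immediate from disjoint increments, and the failure probability $\Pbb(|Z_0|>1)$ is explicit. The large-error event in the nondegeneracy step is simply $|W_{t/2}-W_t/2|>2$, so no support theorem is needed.

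Your route is more flexible: it only uses that the block functional is Lipschitz and exceeds the threshold somewhere on the support of Wiener measure. The paper's route is shorter and entirely elementary.

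Your own check that $D_{\mathrm{opt}}(100\sin(2\pi\,\cdot),1)$ is large is exactly this three-point cancellation. Applied directly to $W$, it gives $D_{\mathrm{opt}}(W,1)\ge\frac12|W_{1/2}-W_1/2|$, a nondegenerate Gaussian lower bound that makes the support-theorem step unnecessary.
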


\begin{proof}
Since $D_{\mathrm{an}}(W,t)\le\sup_{s\le t}|W_s|$, continuity of $W$
at zero and \eqref{def:order} imply that both times are positive.
For $j\ge0$, put
\[
 Z_j=\frac12\left(W_{16j+8}
             -\frac{W_{16j}+W_{16j+16}}2\right).
\]
These variables are independent standard normals: the expression uses
disjoint pairs of increments of length 8. If one affine function fits the
three displayed points with error at most 1, cancellation of its affine
terms implies $|Z_j|\le1$. Hence, with $p=\Pbb(|Z_0|>1)>0$,
\begin{equation}
 \Pbb(\tauopt>16n)\le(1-p)^n\qquad(n\ge0).
 \label{def:exponential-tail}
\end{equation}
By \eqref{def:order}, the same bound holds for every $\tau_A$, proving
the exponential-moment assertion.

For every $t>0$, the event $\sup_{s\le t}|W_s|<1/2$ has positive
probability and implies $D_{\mathrm{an}}(W,t)<1$, hence $\tauan>t$.
Positivity of this Brownian small-ball probability follows, for example,
from positivity of the killed heat kernel on $(-1/2,1/2)$.
On the other hand, $W_{t/2}-W_t/2$ is a nondegenerate normal variable;
if its absolute value exceeds 2, then $D_{\mathrm{opt}}(W,t)>1$.
Thus every $\tau_A$ has positive probability of being smaller than any
given positive time and positive probability of exceeding it. This proves
nondegeneracy and $V_A>0$. The third absolute moments are finite by
the exponential tail.
\end{proof}

\begin{lemma}[Moment identities]
\label{def:absolute-third}
For $q>0$,
\begin{equation}
 m_{q,A}=q\int_0^\infty t^{q-1}S_A(t)\,dt.
 \label{def:tail-moment}
\end{equation}
If
$\gamma_A=(m_{3,A}-3\kappa_A m_{2,A}+2\kappa_A^3)/V_A^{3/2}$,
then
\begin{align}
 \beta_A
 &=\gamma_A+\frac{2\E(\kappa_A-\tau_A)_+^3}{V_A^{3/2}}
 \label{def:beta-positive}\\
 &=\frac{m_{3,A}-3\kappa_A m_{2,A}+4\kappa_A^3
       -6\displaystyle\int_0^{\kappa_A}(\kappa_A-t)^2S_A(t)\,dt}
       {V_A^{3/2}}.
 \label{def:beta-survival}
\end{align}
Here $\gamma_A$ is the signed skewness, whereas $\beta_A$ uses the absolute
third central moment.
\end{lemma}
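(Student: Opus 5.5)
The tail formula \eqref{def:tail-moment} is the layer-cake identity. For $\tau_A\ge0$ I write $\tau_A^q=\int_0^{\tau_A}qt^{q-1}\,dt=\int_0^\infty qt^{q-1}\mathbf 1\{\tau_A>t\}\,dt$, take expectations, and exchange expectation and integral by Tonelli, since the integrand is nonnegative. Finiteness of both sides for every $q>0$ follows from the exponential tail \eqref{def:exponential-tail} in Lemma~\ref{def:finiteness}.

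For \eqref{def:beta-positive}, put $X=\tau_A-\kappa_A$. I start from the pointwise identity $|x|^3=x^3+2(-x)_+^3$, which is checked separately for $x\ge0$ and $x<0$. Taking expectations gives $\E|X|^3=\E X^3+2\E(\kappa_A-\tau_A)_+^3$. Expanding $\E X^3$ gives $m_{3,A}-3\kappa_A m_{2,A}+3\kappa_A^2\kappa_A-\kappa_A^3=m_{3,A}-3\kappa_A m_{2,A}+2\kappa_A^3$. Dividing by $V_A^{3/2}$, which is positive by Lemma~\ref{def:finiteness}, gives \eqref{def:beta-positive} with the stated $\gamma_A$.

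For \eqref{def:beta-survival} I need a survival representation of $\E(\kappa_A-\tau_A)_+^3$. Let $g(s)=(\kappa_A-s)_+^3$ for $s\ge0$. It is $C^1$, satisfies $g(\infty)=0$, and has $g'(t)=-3(\kappa_A-t)^2\mathbf 1\{t<\kappa_A\}$. Hence
\[
 g(\tau_A)=g(0)-\int_0^{\tau_A}g'(t)\,dt\cdot(-1)\cdot(-1)=\kappa_A^3-\int_0^{\kappa_A}3(\kappa_A-t)^2\mathbf 1\{\tau_A>t\}\,dt.
\]
Taking expectations, again with Tonelli on a bounded integrand over a finite interval, gives
\[
 \E(\kappa_A-\tau_A)_+^3=\kappa_A^3-3\int_0^{\kappa_A}(\kappa_A-t)^2S_A(t)\,dt.
\]
Substituting twice this quantity into \eqref{def:beta-positive} gives the numerator $m_{3,A}-3\kappa_A m_{2,A}+2\kappa_A^3+2\kappa_A^3-6\int_0^{\kappa_A}(\kappa_A-t)^2S_A(t)\,dt$, which is \eqref{def:beta-survival}.

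None of these steps is a real obstacle. The only care needed is in the last step: I will write $g(\tau_A)=\kappa_A^3+\int_0^{\tau_A}g'(t)\,dt$ cleanly, watch the sign of $g'$, and note that $\tau_A>0$ almost surely, so the point mass at zero causes no issue.
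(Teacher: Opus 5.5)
Your proof is correct and matches the paper's argument: layer-cake plus Tonelli for \eqref{def:tail-moment}, the pointwise identity $|x|^3=x^3+2(-x)_+^3$ together with expansion of $\E(\tau_A-\kappa_A)^3$ for \eqref{def:beta-positive}, and a second Tonelli step for $\E(\kappa_A-\tau_A)_+^3$. In that last step the paper integrates against $\Pbb(\tau_A\le t)$ and then writes it as $1-S_A(t)$, while you integrate $\mathbf 1\{\tau_A>t\}$ directly, which is the same computation. The only blemish is your intermediate display with the factor $\cdot(-1)\cdot(-1)$: as written it has the wrong sign and should read $g(\tau_A)=g(0)+\int_0^{\tau_A}g'(t)\,dt$, but the final expression you derive from it is correct.
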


\begin{proof}
Writing $x^q=q\int_0^x t^{q-1}\,dt$ and applying Tonelli's theorem proves
\eqref{def:tail-moment}. The identity $|x|^3=x^3+2(-x)_+^3$ and expansion
of $\E(\tau_A-\kappa_A)^3$ give \eqref{def:beta-positive}. A second
application of Tonelli gives
\[
 \E(\kappa_A-\tau_A)_+^3
 =3\int_0^{\kappa_A}(\kappa_A-t)^2\Pbb(\tau_A\le t)\,dt
 =\kappa_A^3-3\int_0^{\kappa_A}(\kappa_A-t)^2S_A(t)\,dt.
\]
Substitution proves \eqref{def:beta-survival}.
\end{proof}

\section{Optimal affine approximation}\label{opt:section}

For $g\in C[0,1]$, write
\begin{align*}
 F(g)&=\inf_{a,b\in\R}\sup_{0\leq t\leq1}|g(t)-at-b|,\\
 R_g(a)&=\max_{0\leq t\leq1}(g(t)-at)
          -\min_{0\leq t\leq1}(g(t)-at),\qquad
 \rho(g)=\min_{a\in\R}R_g(a).
\end{align*}
For a fixed slope, the optimal intercept is the midpoint between the
maximum and minimum of $g(t)-at$; hence $\rho(g)=2F(g)$.
Throughout this section, $m_q=m_{q,\mathrm{opt}}$.

\subsection{Integration over slopes}\label{opt:deterministic-subsection}

\begin{lemma}[Slope integration]\label{opt:coarea-lemma}
Suppose $g$ is continuous and is not affine.  Then $\rho(g)>0$, and for any
nonnegative Borel function $h$ on $(0,\infty)$,
\begin{equation}\label{opt:coarea-general}
 \int_{\R}|R_g'(a)|h(R_g(a))\,da
       =2\int_{\rho(g)}^\infty h(r)\,dr.
\end{equation}
In particular, for $q>0$,
\begin{equation}\label{opt:coarea-power}
 F(g)^{-2q}=2^{2q}q\int_{\R}
       \frac{|R_g'(a)|}{R_g(a)^{2q+1}}\,da.
\end{equation}
At almost every slope $a$, the maximum and minimum of $g(t)-at$ are
attained at unique times $T_+(g-a\,\cdot)$ and $T_-(g-a\,\cdot)$, and
\begin{equation}\label{opt:range-derivative}
 R_g'(a)=T_-(g-a\,\cdot)-T_+(g-a\,\cdot).
\end{equation}
\end{lemma}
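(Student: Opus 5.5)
We would prove the lemma by treating $R_g$ as a convex function of the slope and computing its derivative through a one-sided envelope (Danskin-type) argument.

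\emph{Convexity and positivity.} Write $M(a)=\max_t(g(t)-at)$ and $m(a)=\min_t(g(t)-at)$. Then $M$ is a supremum of affine functions of $a$, so it is convex. Likewise $-m(a)=\max_t(at-g(t))$ is convex, and hence $R_g=M-m$ is convex. Each of $M$ and $-m$ is Lipschitz with constant $1$, since $t\in[0,1]$, so $R_g$ is Lipschitz with constant $2$.

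For positivity, $R_g(a)=0$ forces $g(t)-at$ to be constant, so $g$ would be affine. Moreover $R_g(a)\ge |(g(1)-a)-g(0)|\to\infty$ as $|a|\to\infty$. Hence the minimum $\rho(g)$ is attained and is positive.

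\emph{Derivative formula.} By Danskin's theorem for the compact index set $[0,1]$, the one-sided derivatives are
\[
M'_{\pm}(a)=\max\text{ or }\min\{-t:t\in\arg\max\}.
\]
A convex function is differentiable almost everywhere. At a point where $M$ is differentiable, the set $\{-t:t\in\arg\max\}$ must therefore be a single point, so the maximizer $T_+$ is unique and $M'(a)=-T_+$. The same holds for $-m$, with $(-m)'(a)=T_-$. This yields \eqref{opt:range-derivative} almost everywhere.

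\emph{Coarea identity.} Let $a_0$ be a minimizer of $R_g$. By convexity, $R_g$ is nonincreasing on $(-\infty,a_0]$ and nondecreasing on $[a_0,\infty)$, and it tends to $\infty$ at both ends. On $[a_0,\infty)$ the function $R_g$ is absolutely continuous and monotone, with range $[\rho,\infty)$. The change-of-variables formula for absolutely continuous monotone maps gives
\[
\int_{a_0}^\infty R_g'(a)\,h(R_g(a))\,da=\int_\rho^\infty h(r)\,dr .
\]
One way to justify this for nonnegative Borel $h$ is to check it first on indicators of intervals and then apply a monotone class argument. Any flat pieces, where $R_g'=0$, contribute nothing to the left side and map to a single value of $r$, so they cause no trouble. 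The same argument on $(-\infty,a_0]$ produces the other copy of $\int_\rho^\infty h(r)\,dr$, and together they give \eqref{opt:coarea-general}.

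\emph{Power case.} Take $h(r)=r^{-2q-1}$. Then $2\int_\rho^\infty r^{-2q-1}\,dr=\rho^{-2q}/q$. Since $\rho=2F(g)$, multiplying by $2^{2q}q$ gives \eqref{opt:coarea-power}.

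The step needing most care is the almost-everywhere uniqueness of the maximizer and minimizer together with the envelope derivative. I would handle it through the convexity argument above rather than through any genericity property of $g$.
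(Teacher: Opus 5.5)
Your proof is correct and follows essentially the paper's argument. You use convexity and coercivity of $R_g$, the change of variables on its two monotone branches extended to Borel $h$ by a monotone-class argument, and the envelope (Danskin) description of the one-sided derivatives of $\max_t(g(t)-at)$, which gives unique extremal times and $R_g'=T_--T_+$ at differentiability points; the only cosmetic differences are that you split at a single minimizer $a_0$ and start the monotone-class step from interval indicators rather than from continuous compactly supported $h$.
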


\begin{proof}
The function $R_g$ is convex: each of its two summands is a supremum of
affine functions of $a$.  It is locally Lipschitz and satisfies
\[
 R_g(a)\geq|g(1)-g(0)-a|.
\]
It is therefore coercive and has a nonempty compact interval of
minimizers.  Its minimum is positive, since a zero range would make $g$
affine.  To the left of the minimizer interval $R_g$ is strictly decreasing,
and to its right it is strictly increasing; the possible flat minimum
has derivative zero.  The one-dimensional change-of-variables formula
on these two monotone branches gives \eqref{opt:coarea-general}.  Equivalently,
one can first take $h$ continuous and compactly supported, apply the
chain rule to an antiderivative of $h$, and extend to nonnegative Borel
functions by a monotone-class argument. Taking $h(r)=r^{-2q-1}$ proves
\eqref{opt:coarea-power}.

Let $U(a)=\max_t(g(t)-at)$.  Its right and left
derivatives are the extreme values of $-t$ over its set of maximizing
times.  Hence at a point where $U$ is differentiable all maximizing times
coincide.  The same argument applies to $\max_t(at-g(t))$.  Both convex
functions are differentiable outside sets of Lebesgue measure zero.
Adding their derivatives proves \eqref{opt:range-derivative} at every
slope outside the union of these exceptional sets.  At exceptional
slopes one may choose the leftmost extremal times; these choices do not
affect any of the integrals above.
\end{proof}

\begin{lemma}\label{opt:extrema-uniqueness}
For every fixed $a\in\R$, the maximum and minimum of $W_t-at$ on
$[0,1]$ are attained at unique, distinct times in $(0,1)$ almost surely.
\end{lemma}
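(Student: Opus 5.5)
Since $X_t=W_t-at$ is a Brownian motion with drift, the plan is to prove the two components of the claim (interior location, uniqueness) separately, handling the minimum by applying the same argument to $-X$, which is Brownian motion with drift $+a$. Distinctness is then immediate: $X$ is almost surely nonconstant on $[0,1]$, so $\max X>\min X$ and the two extremal times cannot coincide.

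\emph{Interior location.} Almost surely the maximum is not attained at $t=0$, because by the law of the iterated logarithm (or Blumenthal's $0$--$1$ law) Brownian motion takes positive values in every interval $(0,\delta)$, and the drift $-at=O(t)$ is negligible against $\sqrt{t\log\log(1/t)}$. At $t=1$ I apply the same argument to the time-reversed process $X_1-X_{1-s}$, which is again Brownian motion with drift $-a$. It has a positive value in every $(0,\delta)$, which means that $X_{1-s}<X_1$ fails for some small $s$. Hence $1$ is not a maximizer either. Alternatively, Girsanov's theorem shows that the law of $X$ on $[0,1]$ is equivalent to Wiener measure. Every statement in the lemma concerns a null event, so it suffices to prove it for $a=0$, where these facts are standard.

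\emph{Uniqueness.} Using the Girsanov reduction, I take $a=0$. For rationals $0\le p<q<r<s\le1$, the maximum of $W$ on $[p,q]$ and the maximum on $[r,s]$ are almost surely different. To see this, write
\[
\max_{[r,s]}W=W_q+(W_r-W_q)+\max_{u\in[r,s]}(W_u-W_r).
\]
Conditionally on $\mathcal F_q$, the increment $W_r-W_q$ is Gaussian and independent of the other two terms, so the right-hand side has a continuous conditional law. Therefore it coincides with the $\mathcal F_q$-measurable value $\max_{[p,q]}W$ with probability zero. Taking the countable union over all rational quadruples, almost surely no two disjoint rational intervals have equal maxima. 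If the global maximum were attained at two times $t_1<t_2$, I would choose rationals with $t_1\in[p,q]$, $t_2\in[r,s]$ and $q<r$. Both subinterval maxima would then equal the global maximum, which is a contradiction.

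The only delicate point is the transfer from $a=0$ to general $a$. It is most cleanly justified by Girsanov's theorem on the finite horizon $[0,1]$. One can also avoid it, because the independent-Gaussian-increment argument works verbatim with drift, as $W_r-W_q-a(r-q)$ is still continuously distributed.
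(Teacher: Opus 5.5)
Your proof is correct apart from one sign slip, and it reaches uniqueness by a different mechanism from the paper.

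\textbf{The sign slip.} At $t=1$ you need the reversed process $X_1-X_{1-s}$ to take a \emph{negative} value for arbitrarily small $s$, since that gives $X_{1-s}>X_1$. A positive value gives $X_{1-s}<X_1$, which does not rule out $1$ being a maximizer. The law of the iterated logarithm supplies both signs (its $\liminf$ is $-1$), so the fix is immediate. The positive values are exactly what you need for the minimum via $-X$. This matches the paper's remark that Brownian motion takes ``both positive and negative values immediately after time zero''.

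\textbf{Comparison of routes.} The paper separates two putative maximizing times by a single rational $c$. It compares $\max_{[0,c]}(X-X_c)$ with $\max_{[c,1]}(X-X_c)$, which are independent suprema of drifted Brownian motions, the first run backward from $c$. Its argument therefore needs that such a supremum is atomless. It gets this from the reflection principle at zero drift, and from equivalence of the drifted and undrifted laws otherwise, proved by hand through the bridge-plus-endpoint decomposition rather than by citing Girsanov.

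You instead use two rational intervals separated by a gap $[q,r]$, and take atomlessness from the Gaussian increment $X_r-X_q$ alone; this is the standard textbook argument. It needs no distributional fact about suprema, and the drift is harmless. So, as you note, Girsanov is unnecessary for uniqueness; it is needed, or replaced by the LIL as you do, only for the endpoint statement. The paper's version, in turn, uses one separating parameter instead of a quadruple.

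Your distinctness argument makes explicit what the paper leaves implicit: if the unique argmax equalled the unique argmin, $X$ would be constant.
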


\begin{proof}
For a rational $c\in(0,1)$, the maximum on $[0,c]$
relative to the value at $c$, and the maximum on $[c,1]$ relative to the
same value, are independent suprema of Brownian motions with fixed
drifts, one run backward from $c$.  Each has an atomless distribution.
For zero drift this follows from the reflection principle.  For fixed
drift it follows from equivalence of the path laws: condition on the
endpoint, write the path as a Brownian bridge plus its linear endpoint,
and note that a translated normal endpoint distribution has a strictly
positive density relative to the original one.  Thus these two maxima
cannot be equal with positive probability.  If a path had two distinct
global maximizing times, some rational $c$ would separate them, which
would force just such an equality.  A countable union proves uniqueness.
Apply the argument to $-W$ for the minimum.  Finally, Brownian motion has
both positive and negative values immediately after time zero, by the
reflection principle; time reversal gives the analogous assertion at
time one.  The extrema therefore lie in $(0,1)$ almost surely, and their
times are distinct.  Equivalence of the drifted and undrifted laws
extends the endpoint assertion to every fixed drift.
\end{proof}

\subsection{Gaussian endpoints and ordered extrema}
\label{opt:bridge-subsection}

Write
\[
 W_t=B_t+tZ,\qquad Z=W_1,
\]
where $B$ is a standard Brownian bridge and $Z\sim N(0,1)$ is independent
of $B$.  For a nonconstant path $g$ with specified extremal times, put
\[
 \Delta(g)=|T_+(g)-T_-(g)|,
 \qquad \Phi_q(g)=\frac{\Delta(g)}{\operatorname{range}(g)^{2q+1}}.
\]
The scaling relation in Lemma~\ref{def:scaling} and affine invariance give
$m_q=\E F(W)^{-2q}=\E F(B)^{-2q}$.  Applying
Lemma~\ref{opt:coarea-lemma} to the bridge and using Tonelli's theorem yields
\begin{align}
 m_q
 &=2^{2q}q\int_{\R}\E\Phi_q(B+y\,\cdot)\,dy
 \label{opt:bridge-moment}\\
 &=2^{2q}q\sqrt{2\pi}\,
    \E\left[
      e^{W_1^2/2}\frac{|T_+(W)-T_-(W)|}
      {(\max W-\min W)^{2q+1}}
    \right].
 \label{opt:ordinary-brownian-moment}
\end{align}
The second equality follows by conditioning on $Z$: multiplication by
$\sqrt{2\pi}e^{Z^2/2}$ cancels its density
$(2\pi)^{-1/2}e^{-Z^2/2}$.  All the integrands are nonnegative, so both
identities initially hold in the extended nonnegative reals; their
finiteness follows from Lemma~\ref{def:finiteness}.

On $(0,r)$ let $K_r$ be the killed transition density for generator
$\tfrac12\partial_x^2$:
\begin{equation}\label{opt:heat-kernel}
 K_r(t;x,z)=\frac2r\sum_{n=1}^\infty
  \sin\frac{n\pi x}{r}\sin\frac{n\pi z}{r}
  e^{-n^2\pi^2t/(2r^2)},\qquad t>0.
\end{equation}
The series and all its spatial derivatives converge uniformly when time
is bounded away from zero.  Define inward boundary derivatives by
\begin{align}
 A_0(t,x;r)&=\left.\partial_zK_r(t;x,z)\right|_{z=0},
 \nonumber\\
 A_r(t,x;r)&=-\left.\partial_zK_r(t;x,z)\right|_{z=r},
 \label{opt:boundary-kernels}\\
 C(t;r)&=-\left.\partial_x\partial_zK_r(t;x,z)
                         \right|_{x=r,z=0}.
 \nonumber
\end{align}
In particular,
\begin{align}
 A_0(t,x;r)&=\frac{2\pi}{r^2}\sum_{n=1}^\infty
 n\sin(n\pi x/r)e^{-n^2\pi^2t/(2r^2)},
 \label{opt:boundary-series}\\
 C(t;r)&=\frac{2\pi^2}{r^3}\sum_{n=1}^\infty
 (-1)^{n+1}n^2e^{-n^2\pi^2t/(2r^2)},
 \nonumber
\end{align}
and $A_r(t,x;r)=A_0(t,r-x;r)$.  These kernels are positive, although
their displayed series are not positive term by term.  Positivity of
$A_0,A_r$ follows either from the heat equation and the inward boundary
derivative, or from their interpretation as twice the corresponding
exit-time densities.  Differentiating the semigroup identity gives
\begin{equation}\label{opt:cross-kernel-positive}
 C(t;r)=\int_0^r A_r(t/2,y;r)A_0(t/2,y;r)\,dy>0.
\end{equation}

Translate these definitions to an interval $(l,h)$, writing $A_l,A_h$
for the two inward derivatives and $C_{lh}$ for the derivative across
the two different endpoints.  We retain the endpoint of the Brownian
path in the next density because the weight in
\eqref{opt:ordinary-brownian-moment} depends on that endpoint.

\begin{proposition}\label{opt:ordered-density-proposition}
Let $l<0<h$, $w\in(l,h)$, and $0<u<v<1$.  The part of the joint density
of $(\min W,\max W,T_-(W),T_+(W),W_1)$ for which the minimum is attained
before the maximum is
\begin{equation}\label{opt:ordered-extrema-density}
 \frac14 A_l(u,0)C_{lh}(v-u)A_h(1-v,w)
       \,dl\,dh\,du\,dv\,dw.
\end{equation}
Here $dl$ denotes positive Lebesgue volume for the variable $\min W$.
The other order has the same formula with the two boundary labels
interchanged.  There are no additional time-endpoint or diagonal
components.
\end{proposition}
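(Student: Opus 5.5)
The plan is to compute the joint law of $(\min W,\max W,T_-,T_+,W_1)$ on the event $\{T_-<T_+\}$ by discretizing in the extremal values and applying the Markov property at the two extremal times. Lemma~\ref{opt:extrema-uniqueness} (zero drift) ensures the extremal times are a.s.\ unique, distinct and in $(0,1)$. This rules out time-endpoint components ($u=0$ or $v=1$) and diagonal components ($u=v$). It also shows that on a full-measure set the ordered events $\{T_-<T_+\}$ and $\{T_+<T_-\}$ partition the sample space. By Lebesgue differentiation, it then suffices to compute
\[
 \lim\frac{1}{\delta^2\eta^2\epsilon}\,\Pbb\bigl(\min W\in[l-\delta,l],\ \max W\in[h,h+\delta],\ T_-\in[u,u+\eta],\ T_+\in[v,v+\eta],\ W_1\in[w,w+\epsilon]\bigr)
\]
and identify it with \eqref{opt:ordered-extrema-density}. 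To make the limit legitimate, I would instead verify the identity against test functions. The proposed density must integrate, over all parameters and both orders, to total mass $1$; comparing it with a lower bound obtained by Fatou's lemma then upgrades a one-sided density estimate to equality.

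For the event in the display, I would split the path at times $u$ and $v$ (with the minimum at $u$ and the maximum at $v$) into three pieces. On $[0,u]$ the path starts at $0$, stays in $(l,h)$, and reaches near $l$ at time $u$. On $[u,v]$ it travels from near $l$ to near $h$ inside the strip. On $[v,1]$ it goes from near $h$ to $w$, staying in the strip. The killed kernel $K$ on $(l,h)$ gives each piece, via Chapman--Kolmogorov. The condition that the minimum is \emph{attained} in $[u,u+\eta]$ at level about $l$ produces the inward boundary derivative. For a path started at $x$ and killed on the interval, the density of being at distance $z$ from the lower edge at time $u$ behaves like $z\,A_l(u,x)$ as $z\to0$. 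The window of width $\delta$ in the level contributes $\int_0^\delta z\,dz$-type factors, which pair with the factors coming from the middle and final pieces. The middle piece, starting and ending within $\delta$ of opposite walls, behaves like $z z'\,C_{lh}(v-u)$, and the last piece like $z'A_h(1-v,w)$. The factors of $1/2$ come from the normalization of exit-time densities relative to boundary derivatives (each $A$ is twice an exit-time density) together with the time-window bookkeeping. Collecting them should produce the constant $\tfrac14$. I will fix this constant by checking total mass, or equivalently by integrating out $u,v,w$ and comparing with the known law of the range, rather than by tracking every factor.

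A cleaner rigorous route, which I would actually write, is the following. Perturb the strip: the event $\{l<\min W,\ \max W<h,\ W_1\in dw\}$ has density $K_{(l,h)}(1;0,w)$. Differentiating in $l$ and $h$ gives the joint density of $(\min W,\max W,W_1)$. Using Hadamard-type boundary-variation formulas for the Dirichlet heat kernel, this mixed derivative is represented as a double time integral $\int\!\!\int A_l(u,0)\,C_{lh}(v-u)\,A_h(1-v,w)\,du\,dv$ over $u<v$, plus its mirror over $v<u$. The task is then to identify the integration variables with the extremal times, that is, to show that the two integrands are the joint densities including $(u,v)$ rather than merely a decomposition of the marginal. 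For this identification I would use the approximation above on events $\{T_-\le u_0,\ T_+\ge v_0\}$, which are expressible through strip-survival probabilities on $[0,u_0]$ and $[v_0,1]$ joined by the Markov property.

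The main obstacle is the justification of the boundary limits near the walls. The uniform convergence of \eqref{opt:heat-kernel} only holds for times bounded away from $0$, so the windows must be kept away from $u=0$, $v-u=0$ and $v=1$. The dominated passage to the limit, or alternatively the mass-conservation check that shows nothing is lost at those degenerate times, is where the real work lies. For the positivity and integrability needed there, I would lean on \eqref{opt:cross-kernel-positive} and the positivity of $A_0,A_r$.
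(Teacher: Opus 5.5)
Your rigorous route is correct in outline, but it is not the paper's. The paper perturbs the walls in a time-dependent way, $(l-\eta b(t),\,h+\epsilon a(t))$ with $a,b$ supported in $(0,1)$. It computes $\partial_\epsilon\partial_\eta$ of the killed density by two applications of the Green--Poisson formula, each contributing a factor $\tfrac12$. It then identifies the time variables through the pathwise derivatives $\frac{d}{d\eta}\min_t(W_t+\eta b(t))|_0=b(T_-)$ and $\frac{d}{d\epsilon}\max_t(W_t-\epsilon a(t))|_0=-a(T_+)$, which hold because the extrema are unique. The weights $b(u)a(v)$ therefore act directly as test functions of $(T_-,T_+)$.

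You vary only the constant levels and identify times through the quadrants $\{T_-\le u_0,\ T_+\ge v_0\}$, $0<u_0<v_0<1$. That idea does all the work by itself. By the Markov property at the deterministic times $u_0,v_0$, the density of $(\min W,\max W,W_1)$ on such a quadrant is
\[
 \int\!\!\int\bigl(-\partial_lK_{(l,h)}(u_0;0,x)\bigr)\,K_{(l,h)}(v_0-u_0;x,y)\,\partial_hK_{(l,h)}(1-v_0;y,w)\,dx\,dy .
\]
This becomes $\tfrac14\int_0^{u_0}\int_{v_0}^1A_l(u,0)C_{lh}(v-u)A_h(1-v,w)\,dv\,du$ using three ingredients:
\begin{itemize}
\item the first-order formula $-\partial_lK_{(l,h)}(t;0,x)=\tfrac12\int_0^tA_l(s,0)A_l(t-s,x)\,ds$, which can be checked from \eqref{opt:resolvent};
\item its mirror at $h$;
\item the semigroup identities $\int A_l(\tau,x)K_{(l,h)}(\sigma;x,y)\,dx=A_l(\tau+\sigma,y)$ and $\int A_l(\tau,y)A_h(\sigma,y)\,dy=C_{lh}(\tau+\sigma)$.
\end{itemize}
These quadrants form a $\pi$-system generating the Borel sets of $\{u<v\}$, so this identifies the law on $\{T_-<T_+\}$, including the constant $\tfrac14$.

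Consequently the mixed second-order Hadamard formula, the window limits, and the total-mass/Fatou normalization are all superfluous. The obstacle you single out does not arise, since $u_0$, $v_0-u_0$ and $1-v_0$ are all positive. Your opening phrase about applying the Markov property \emph{at} the extremal times cannot be taken literally, because $T_\pm$ are not stopping times; the quadrant argument needs only fixed times.

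The trade-off is as follows. The paper's computation treats both orders symmetrically with smooth time weights. In exchange it must justify parameter derivatives of a heat kernel with moving boundaries (by mapping to a fixed interval and using Duhamel) and the derivative of the perturbed survival indicator. Your route uses only fixed intervals and fixed times. Its costs are a monotone-class step and differentiating, in $l_1,h_2$ on the diagonal, the four-level survival function
\[
 \Pbb\bigl(\min_{[0,u_0]}W>l_1,\ \min_{[u_0,1]}W>l_2,\ \max_{[0,v_0]}W<h_1,\ \max_{[v_0,1]}W<h_2,\ W_1\in dw\bigr).
\]
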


The proof is given in Appendix~\ref{app:ordered-density}.

The density \eqref{opt:ordered-extrema-density} is consistent with the
five-variable extremum distribution, including the terminal position,
in \cite[Section~2.3, Eq.~(22), and Appendix~D]{SchehrLeDoussal2010},
after converting their variance convention $\E W_t^2=2t$ to ours.
The three-path decomposition is also used in
\cite[Section~III.A]{Mori2020}. Appendix~\ref{app:ordered-density} gives a boundary-variation
derivation with the normalization needed here.
Integrating over the two time orders gives
$-\partial_l\partial_hK_{l,h}(1;0,w)$.  For example, taking a Laplace
transform in total time and putting $x=-l$, $z=w-l$, $r=h-l$,
$k=\sqrt{2\lambda}$, both sides become
\begin{equation}\label{opt:resolvent-normalization}
 \frac{2k}{\sinh^3(kr)}
 \{\sinh(k(r-x))\sinh(kz)
     +\sinh(kx)\sinh(k(r-z))\}.
\end{equation}
This identity follows directly by differentiating the resolvent displayed
in \eqref{opt:resolvent} below, with the endpoints held fixed in the
original coordinates.

\subsection{Positive kernels and Laplace transforms}
\label{opt:positive-integrals-subsection}

Substitute Proposition~\ref{opt:ordered-density-proposition} into
\eqref{opt:ordinary-brownian-moment}, and change variables to
\[
 r=h-l,\qquad x=-l,\qquad z=w-l,\qquad s=v-u.
\]
The spatial and time Jacobians have absolute value one, $0<x,z<r$,
and $u,s>0$, $u+s<1$.  Spatial reflection exchanges the two time orders
and preserves $e^{(z-x)^2/2}$, so they have equal contributions.  Thus
\begin{align}
 m_q={}&2^{2q-1}q\sqrt{2\pi}
  \int_0^\infty\frac{dr}{r^{2q+1}}
  \int_0^r\int_0^r e^{(z-x)^2/2}
 \nonumber\\[-2pt]
 &\qquad\times\int_{\substack{u,s>0\\u+s<1}}
 s A_0(u,x;r)C(s;r)A_r(1-u-s,z;r)
 \,du\,ds\,dx\,dz.
 \label{opt:five-dimensional-moment}
\end{align}
The prefactor is
$2^{2q}q\sqrt{2\pi}\times(1/4)\times2$.
The kernel functions are positive, so Tonelli's theorem applies before
introducing their spectral expansions.

We now use unit-width kernels, suppressing their final argument $r=1$,
and write $A_1$ for the upper boundary kernel.  Define
\begin{equation}\label{opt:convolution-definition}
 Q_{a,b}(T)=\int_{\substack{v,w>0\\v+w<T}}
 A_0(v,a)\,[wC(w)]\,A_1(T-v-w,b)\,dv\,dw,
 \qquad 0<a,b<1.
\end{equation}
The changes of variables
$x=ra$, $z=rb$, $u=r^2v$, $s=r^2w$, $T=r^{-2}$ use the scaling
$A\mapsto r^{-2}A$ and $C\mapsto r^{-3}C$.  The powers of $r$ in
\eqref{opt:five-dimensional-moment}, before changing $dr$, are
\[
 \underbrace{r^{-2q-1}}_{\text{range weight}}
 \underbrace{r^2}_{dx\,dz}
 \underbrace{r^4}_{du\,ds}
 \underbrace{r^2}_{s}
 \underbrace{r^{-7}}_{A_0CA_r}=r^{-2q}.
\]
As $r^{-2q}\,dr=-\tfrac12T^{q-3/2}\,dT$, we obtain the following
formula for every real $q>0$:
\begin{equation}\label{opt:unit-width-moment}
 m_q=2^{2q-2}q\sqrt{2\pi}
 \int_0^1\int_0^1\int_0^\infty
 T^{q-3/2}e^{(b-a)^2/(2T)}Q_{a,b}(T)\,dT\,da\,db.
\end{equation}

The same kernels describe the whole distribution.
\begin{proposition}\label{opt:survival-proposition}
The survival function $S_{\rm opt}(t)=\Pbb\{\tauopt>t\}$ is continuous
for $t>0$ and satisfies
\begin{equation}\label{opt:survival-kernel}
 S_{\rm opt}(4T)=\frac{\sqrt{2\pi}}{4\sqrt T}
   \int_0^1\int_0^1
       e^{(b-a)^2/(2T)}Q_{a,b}(T)\,da\,db,
 \qquad T>0.
\end{equation}
\end{proposition}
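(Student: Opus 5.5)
The plan is to read the survival function as a distribution function of the minimal range $\rho$, and to obtain it from the slope-integration identity with an indicator weight rather than a power weight. By Lemma~\ref{def:scaling}, $S_{\rm opt}(4T)=\Pbb\{F(W)^{-2}>4T\}=\Pbb\{\rho(W)<c\}$ with $c=T^{-1/2}$. Affine invariance lets me replace $W$ by the bridge $B$, as in \eqref{opt:bridge-moment}. Since $B$ is almost surely not affine, I apply \eqref{opt:coarea-general} with $h=\mathbf 1_{(0,s)}$. This gives the pathwise identity $2(s-\rho(B))_+=\int_{\R}|R_B'(a)|\mathbf 1\{R_B(a)<s\}\,da$.

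Next I take expectations and rewrite both sides. Using Tonelli, the left side becomes $2\int_0^s\Pbb\{\rho<c\}\,dc$. For the right side I run exactly the chain that produced \eqref{opt:five-dimensional-moment}, with the weight $r^{-2q-1}$ replaced by $\mathbf 1\{r<s\}$:
\begin{itemize}
\item conditioning on $Z$ and the factor $\sqrt{2\pi}e^{W_1^2/2}$, as in \eqref{opt:ordinary-brownian-moment};
\item the ordered-extremum density of Proposition~\ref{opt:ordered-density-proposition};
\item the reflection symmetry that doubles one time order;
\item the change of variables to $(r,x,z,u,s)$.
\end{itemize}
The result is $\int_0^s g(r)\,dr$, where
\[
 g(r)=\tfrac{\sqrt{2\pi}}{4}\int_0^r\!\!\int_0^r e^{(z-x)^2/2}
 \int_{u,s'>0,\,u+s'<1} s' A_0(u,x;r)C(s';r)A_r(1-u-s',z;r)\,du\,ds'\,dx\,dz .
\]
The constant is $\sqrt{2\pi}\cdot\tfrac14\cdot2\cdot\tfrac12$, where the final $\tfrac12$ comes from dividing by the factor $2$ on the left. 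The rescaling $x=ra$, $z=rb$, $u=r^2v$, $s'=r^2w$, $T=r^{-2}$ is the same as before, but the power count is now $r^2r^4r^2r^{-7}=r$. This should give $g(T^{-1/2})=\frac{\sqrt{2\pi}}{4\sqrt T}\int_0^1\!\int_0^1 e^{(b-a)^2/(2T)}Q_{a,b}(T)\,da\,db$, which is the right side of \eqref{opt:survival-kernel}.

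Comparing the two sides, $\int_0^s\Pbb\{\rho<c\}\,dc=\int_0^s g(c)\,dc$ for every $s>0$, so $\Pbb\{\rho<c\}=g(c)$ for almost every $c$. To upgrade this to every $c$, and to get continuity of $S_{\rm opt}$, I will show that $g$ is continuous on $(0,\infty)$. Granting that, a monotone function that agrees almost everywhere with a continuous function is itself continuous and agrees everywhere. Hence $c\mapsto\Pbb\{\rho<c\}$ is continuous and equals $g(c)$, which gives both claims.

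The main obstacle is continuity of $g$, equivalently of $T\mapsto T^{-1/2}\int\!\int e^{(b-a)^2/(2T)}Q_{a,b}(T)\,da\,db$ on $(0,\infty)$. I would prove it by dominated convergence on compact $T$-intervals $[T_0,T_1]$. The factor $e^{(b-a)^2/(2T)}$ is bounded there. The kernels $A_0$ and $A_1$ are exit-time densities, hence integrable in time and uniformly bounded in the spatial variable once time is bounded away from zero. The factor $wC(w)$ is bounded on $(0,T_1)$, because $C(w)$ decays like the square of a boundary density as $w\to0$.

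The delicate point is the small-time behaviour of $A_0(v,a)$ near $v=0$ and $a$ near $0$. I would handle it by splitting the convolution according to which of $v$, $w$ and $T-v-w$ exceeds $T/3$. On each piece, the kernel evaluated at the long time is bounded by the uniformly convergent spectral series. The remaining two kernels integrate, uniformly in $(a,b)$ and $T\in[T_0,T_1]$, to at most their total masses, which are hitting probabilities and hence at most $2$. If this bookkeeping becomes awkward, a fallback is to work with the Laplace transform \eqref{opt:resolvent-normalization} to obtain joint continuity in $T$.
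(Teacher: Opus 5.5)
Your proposal is correct and follows the paper's proof: you use the slope-integration identity with the indicator $h=\mathbf 1_{(0,s)}$ in place of the paper's general compactly supported $h$, then the ordered-extremum density, the reflection doubling and width scaling to get the identity for almost every level, and finally continuity together with monotonicity to extend it to every $T$. The one real difference is the domination step. The paper uses image-representation Gaussian bounds to build an integrable majorant on the rescaled time simplex times $(0,1)^2$. Your three-way split instead gives a bound on $Q_{a,b}(T)$ that is uniform in $(a,b)$ and in $T\in[T_0,T_1]$. That works, but dominated convergence over $(a,b)$ also needs $T\mapsto Q_{a,b}(T)$ to be continuous for each fixed interior $(a,b)$, and you should say so. This holds because $Q_{a,b}$ is then a convolution of bounded, integrable time kernels.
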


\begin{proof}
For a nonnegative compactly supported function $h$,
\eqref{opt:coarea-general} and the endpoint cancellation give
\[
 2\int_0^\infty h(r)\Pbb\{\rho(W)<r\}\,dr
 =\sqrt{2\pi}\,\E[e^{W_1^2/2}\Delta(W)h(\max W-\min W)].
\]
Use the ordered density and the same spatial reflection as above.
After equating the level densities, this says, for almost every $r>0$,
\begin{align*}
 \Pbb\{\rho(W)<r\}
  ={}&\frac{\sqrt{2\pi}}4
   \int_0^r\int_0^r e^{(z-x)^2/2}\\
 &\quad\times\int_{\substack{u,s>0\\u+s<1}}
    s A_0(u,x;r)C(s;r)A_r(1-u-s,z;r)
    \,du\,ds\,dx\,dz.
\end{align*}
The width scaling of the inner integral is $r$.
Setting $r=T^{-1/2}$ therefore gives \eqref{opt:survival-kernel}
almost everywhere, since
$\tauopt\overset{d}=4\rho(W)^{-2}$.

We spell out why this identifies a continuous version and hence holds
at every positive $T$.  On a compact positive $T$ interval, the
exponential factor in \eqref{opt:survival-kernel} is bounded.
The image representation of the heat kernel gives, for $0<t\leq T_0$,
\[
 A_0(t,a)\leq C_0 a t^{-3/2}e^{-a^2/(2t)},\qquad
 A_1(t,b)\leq C_0(1-b)t^{-3/2}e^{-(1-b)^2/(2t)},
\]
and
$C(t)\leq C_1t^{-5/2}e^{-1/(4t)}$ after increasing $C_1$.
The first two bounds also follow by comparison with the corresponding
half-line exit densities.  Their integrals in $a$ and $b$ are bounded
by constants times $t^{-1/2}$; the bound for $C$ follows by
differentiating the image terms, whose spatial displacements across
the endpoints have length at least one.  Rescale $v,w,T-v-w$ by $T$
in \eqref{opt:convolution-definition}.  These bounds give an integrable
majorant on the fixed time simplex and on $(a,b)\in(0,1)^2$, uniform
for $T$ in the chosen compact interval.  Dominated convergence proves
continuity.  A monotone survival function equal almost everywhere to
this continuous function cannot have a jump: its left and right limits
are the same continuous value.  Thus the law is atomless and the
identity holds everywhere.
\end{proof}

For $T>0$ and $d\in\R$, a Gaussian integral gives the positive identity
\begin{equation}\label{opt:gaussian-laplace}
 T^{-1/2}e^{d^2/(2T)}
 =\frac1{\sqrt\pi}\int_0^\infty
   \lambda^{-1/2}\cosh(d\sqrt{2\lambda})e^{-\lambda T}\,d\lambda.
\end{equation}
For example, put $k=\sqrt{2\lambda}$ and evaluate the even Gaussian
integral on the real line. With $k=\sqrt{2\lambda}$, the unit-interval
Dirichlet resolvent is
\begin{equation}\label{opt:resolvent}
 \widehat K_\lambda(x,z)
  =\int_0^\infty e^{-\lambda t}K_1(t;x,z)\,dt
  =\frac{2\sinh(k\min(x,z))\sinh(k(1-\max(x,z)))}
          {k\sinh k}.
\end{equation}
This expression is obtained by solving
$(\lambda-\tfrac12\partial_x^2)\widehat K_\lambda(x,z)=\delta_z(x)$
with zero boundary values; its first derivative has jump $-2$ at $z$.
Taking inward derivatives yields
\begin{equation}\label{opt:boundary-transforms}
 \widehat A_0(\lambda,a)=\frac{2\sinh(k(1-a))}{\sinh k},\qquad
 \widehat A_1(\lambda,b)=\frac{2\sinh(kb)}{\sinh k},\qquad
 \widehat C(\lambda)=\frac{2k}{\sinh k}.
\end{equation}
In particular, with
\begin{equation}\label{opt:f-definition}
 f(k)=\coth k-\frac1k,
\end{equation}
the transform of $tC(t)$ is
$-\partial_\lambda\widehat C=2f(k)/\sinh k$.  Thus, writing the
transform of $Q$ as a function of $k$,
\begin{equation}\label{opt:q-transform}
 \widehat Q_{a,b}(k)
 :=\int_0^\infty e^{-k^2T/2}Q_{a,b}(T)\,dT
 =\frac{8f(k)}{\sinh^3 k}
   \sinh(k(1-a))\sinh(kb).
\end{equation}
All differentiations here are justified for $\lambda>0$: at small
times the cross-boundary kernel has Gaussian decay, and at large
times the spectral kernel has exponential decay.  The same estimates
permit every finite number of $\lambda$ derivatives.

\subsection{The survival function as an image series}

We now reduce \eqref{opt:survival-kernel} to a single series. Write
$E_1(x)=\int_x^\infty e^{-u}\,du/u$ for $x>0$.

\begin{proposition}[An image series]
\label{const:optimal-image-series}
For $m\ge1$, put
\begin{align}
 A_m(t)&=-\frac{8(2m^2+1)}3+\frac t6\left(1-\frac1{m^2}\right),
 \notag\\
 B_m(t)&=\frac{(2m+1)^4}{3m(m+1)}
       +\frac t{12}\left(-2+\frac1{m^2}+\frac1{(m+1)^2}\right),
 \label{const:image-coefficients}\\
 \Psi_m(t)&=A_m(t)e^{-8m^2/t}+B_m(t)e^{-8m(m+1)/t}
 \notag\\[-2pt]
 &\qquad+4m^2E_1(8m^2/t)-4m(m+1)E_1(8m(m+1)/t).
 \notag
\end{align}
Then, for every $t>0$,
\begin{equation}
 S_{\mathrm{opt}}(t)=1+\sum_{m=1}^\infty\Psi_m(t).
 \label{const:optimal-survival-series}
\end{equation}
The series converges absolutely and uniformly on each bounded interval
$0<t\le t_0$, with its continuous extension at zero.
\end{proposition}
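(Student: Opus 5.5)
Starting from Proposition~\ref{opt:survival-proposition}, we evaluate the right side of \eqref{opt:survival-kernel} in closed form. The route runs in $\lambda$-space and expands everything in geometric series of $e^{-2k}$. Because of \eqref{opt:gaussian-laplace}, the factor $T^{-1/2}e^{(b-a)^2/(2T)}$ is itself a Laplace transform. The product with $Q_{a,b}(T)$ therefore becomes a Laplace convolution, and after the $a,b$ integration we can work with transforms.

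\textbf{Step 1: expand $Q_{a,b}$ into a finite-mode sum.} Expand $1/\sinh^3k=8e^{-3k}\sum_{j\ge0}\binom{j+2}{2}e^{-2jk}$, and write $f(k)=1+2\sum_{i\ge1}e^{-2ik}-1/k$. Then \eqref{opt:q-transform}, multiplied out with $\sinh(k(1-a))\sinh(kb)$, becomes a series of terms
\[
 e^{-kc}\quad\text{and}\quad e^{-kc}/k,
 \qquad c=2n\pm(1-a)\pm b,
\]
with polynomial coefficients in $n$. Each term inverts explicitly. For $c>0$, $e^{-kc}$ is the transform of $c\,(2\pi T^3)^{-1/2}e^{-c^2/(2T)}$, and $e^{-kc}/k$ is the transform of $(2\pi T)^{-1/2}e^{-c^2/(2T)}$. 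This is the image expansion of $Q_{a,b}$, which can be checked against the small-time bounds in the proof of Proposition~\ref{opt:survival-proposition}.

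\textbf{Step 2: perform the $a,b$ integrals.} Multiply by $e^{(b-a)^2/(2T)}$. The exponent $-c^2/(2T)+(b-a)^2/(2T)$ is then linear in $a,b$ plus a constant, because $c^2-(b-a)^2$ factors as
\[
 (c-(b-a))(c+(b-a)),
\]
and one of the factors has $a,b$ entering only through $a-b$, or through $a+b$ up to the sign choices. After the substitution $p=a-b$, $p'=a+b$, each term becomes an elementary integral of $e^{-\alpha(\cdot)/T}$ times a power of $T$. The result is a combination of $e^{-8m^2/t}$ and $e^{-8m(m+1)/t}$ with coefficients polynomial in $t$, using $t=4T$. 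Endpoint contributions with a $1/p$-type weight produce the $E_1$ terms. The exponents $8m^2/t=2m^2/T$ and $8m(m+1)/t$ arise from $c\in\{2m,\,2m+1\pm\cdots\}$ evaluated at the corners $a,b\in\{0,1\}$.

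\textbf{Step 3: regroup and fix the constant.} Regroup by $m$ to identify $A_m,B_m$ and the $E_1$ coefficients $4m^2$ and $-4m(m+1)$. The constant $1$ comes from the term with $c$ smallest, i.e.\ the diagonal $a=b$ contribution, which reproduces $\lim_{t\to0}S=1$.

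\textbf{Main obstacle: the bookkeeping and the interchange in Steps~1 and~2.} The expansions contain many sign-alternating terms with polynomial weights. Interchanging sum and integral needs absolute convergence, which holds since for fixed $t\le t_0$ the terms decay like $n^{k}e^{-n^2/t}$. We would justify the interchange termwise in $\lambda$ for $\lambda>0$ and use uniqueness of Laplace transforms, with continuity from Proposition~\ref{opt:survival-proposition}.

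To catch coefficient errors, we check the resulting series for large $m$ by the asymptotics $E_1(x)\sim e^{-x}(1/x-1/x^2+\cdots)$. This shows each $\Psi_m$ is $O(m^{2}e^{-8m^2/t})$, which gives absolute and uniform convergence on $(0,t_0]$ and the continuous extension $S(0)=1$. As a consistency test, we check numerically that $\int_0^\infty S=\kopt$ from Table~\ref{tab:constants}.
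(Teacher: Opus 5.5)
Your route is the paper's. You expand $\widehat Q_{a,b}$ geometrically into images $e^{-Lk}$ and $e^{-Lk}/k$ with $L=2m+1-\sigma(1-a)-\eta b$, $\sigma,\eta\in\{\pm1\}$; note the offsets are odd, not $2n\pm\cdots$. You invert termwise, identify by Laplace uniqueness, and reach $S_{\mathrm{opt}}(4T)=4T^{-2}\sum_m\sum_{\sigma,\eta}\sigma\eta\iint(c_mL-b_mT)e^{-(L^2-d^2)/(2T)}\,da\,db$ with $d=b-a$. You then integrate over the square and regroup. Your opening remark about a Laplace convolution is never used; like the paper, you work in $T$ after inverting.

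\textbf{The gap is in Step~2.} The exponent $L^2-d^2$ is linear only for the two equal-sign images, where it equals $4m(m+a-b)$ or $4(m+1)(m+1-a+b)$. For the mixed-sign images, $L=2m+a+b$ or $L=2m+2-a-b$. Your own factorization then gives $L^2-d^2=4(m+a)(m+b)$ or $4(m+1-a)(m+1-b)$, i.e.\ $4uv$ with $(u,v)\in[m,m+1]^2$, which is bilinear.

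This matters. If every exponent were linear, integrating polynomial times exponential over the square would give only exponentials with coefficients rational in $T$, and no $E_1$ could appear. Your ``endpoint contributions with a $1/p$-type weight'' therefore have no source in the computation you describe. The missing step is the paper's $J_m$. With $h=2/T$, integrating $e^{-huv}$ in $v$ gives $(e^{-hmu}-e^{-h(m+1)u})/(hu)$, and this $1/u$ weight integrates to $\tfrac T2\{E_1(hm^2)-2E_1(hm(m+1))+E_1(h(m+1)^2)\}$. Multiply by $-b_mT$, by $\sigma\eta=-1$ for each of the two mixed images, and by $4T^{-2}$. Adding the contributions of $m$ and $m-1$ then gives exactly $4m^2E_1(8m^2/t)-4m(m+1)E_1(8m(m+1)/t)$.

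\textbf{Step~3 misplaces the constant.} On the diagonal $a=b$ every exponent is at least $2m^2/T$, so nothing there survives as $T\to0$. The paper instead pairs the image $\sigma=\eta=1$ at $m$ with the image $\sigma=\eta=-1$ at $m-1$. Their exponents are $2m(m\mp d)/T$, integrated against the density $1-|d|$ of $d$ on $(-1,1)$. This produces $e^{-hm(m-1)}$, $e^{-hm^2}$ and $e^{-hm(m+1)}$. At $m=1$ the first is identically $1$, with coefficient $T^2/4$, which gives the constant $1$ after the prefactor $4T^{-2}$. Geometrically, the exponent vanishes at the corner $(a,b)=(0,1)$, where $L=b-a=1$, not on the diagonal.

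Your closing convergence and continuity argument is fine and matches the paper's bound $C_{t_0}(1+m^2)e^{-8m^2/t_0}$.
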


\begin{proof}
Set $x=1-a$, $y=b$, $d=x+y-1$, and, for $m\ge1$, let
$b_m=m(m+1)/2$ and $c_m=m(m+1)(2m+1)/6$.
The geometric expansions of $\sinh^{-3}k$ and
$\coth k\,\sinh^{-3}k$ in \eqref{opt:q-transform} give
\[
 \widehat Q_{1-x,y}(k)
 =16\sum_{m\ge1}\sum_{\sigma,\eta\in\{-1,1\}}
 \sigma\eta\left(c_m-\frac{b_m}{k}\right)e^{-Lk},
 \qquad L=2m+1-\sigma x-\eta y.
\]
For $L>0$, the inverse Laplace transforms, with $k=\sqrt{2\lambda}$, are
\[
 e^{-Lk}\longleftrightarrow
 \frac{L e^{-L^2/(2T)}}{\sqrt{2\pi}T^{3/2}},
 \qquad
 \frac{e^{-Lk}}k\longleftrightarrow
 \frac{e^{-L^2/(2T)}}{\sqrt{2\pi T}}.
\]
These follow from the Gaussian integral. Since $L\ge2m-1$, the
inverted series converges locally uniformly in $T>0$ and uniformly
in $(x,y)\in[0,1]^2$. Its Laplace transform may be taken termwise
for sufficiently large $\lambda$, which identifies it with $Q$.
Substitution into \eqref{opt:survival-kernel} yields
\begin{equation}
 S_{\mathrm{opt}}(4T)=\frac4{T^2}\sum_{m\ge1}
 \sum_{\sigma,\eta}\sigma\eta\int_0^1\int_0^1
 (c_mL-b_mT)e^{-(L^2-d^2)/(2T)}\,dx\,dy.
 \label{const:image-spatial-integral}
\end{equation}

For completeness, the spatial integrations can be performed as follows.
Put $h=2/T$. For the two equal signs, $L^2-d^2$ depends only on
$x+y$. Combining the $++$ term at $m$ with the $--$ term at $m-1$
gives
\[
 C_m\{2mF_m-D_m\}-m^2TF_m,
 \qquad C_m=\frac{m(2m^2+1)}3,
\]
where the absent term at $m=1$ is zero, and
\[
 F_m=e^{-hm^2}\frac{2(\cosh(hm)-1)}{h^2m^2},\qquad
 D_m=e^{-hm^2}
 \left.\frac{d}{dz}\frac{2(\cosh z-1)}{z^2}\right|_{z=hm}.
\]
Here $F_m$ and $D_m$ are the integrals with weights $1-|d|$ and
$d(1-|d|)$ over $-1<d<1$. Each mixed-sign term is
\begin{align*}
 J_m&=\int_m^{m+1}\int_m^{m+1}
       \{c_m(u+v)-b_mT\}e^{-huv}\,du\,dv\\
 &=\frac{c_mT^2}{2}\left\{
 \frac{e^{-hm^2}}m-\left(\frac1m+\frac1{m+1}\right)e^{-hm(m+1)}
 +\frac{e^{-h(m+1)^2}}{m+1}\right\}\\
 &\quad-\frac{b_mT^2}{2}
 \{E_1(hm^2)-2E_1(hm(m+1))+E_1(h(m+1)^2)\}.
\end{align*}
This follows by first integrating $e^{-huv}$ in $v$.
Thus \eqref{const:image-spatial-integral} is
$4T^{-2}\sum_m\{C_m(2mF_m-D_m)-m^2TF_m-2J_m\}$.
Collecting the square and consecutive-product exponents, with
$t=4T$, gives \eqref{const:image-coefficients} and
\eqref{const:optimal-survival-series}. The constant $1$ comes from
the exponent $m(m-1)=0$ at $m=1$.
Finally, $E_1(x)\le e^{-x}/x$ bounds the absolute values of the
summands by $C_{t_0}(1+m^2)e^{-8m^2/t_0}$ on $0<t\le t_0$.
This proves the asserted convergence and justifies the rearrangements.
\end{proof}

\subsection{The first four moments}
\label{opt:raw-moment-evaluation}

For a positive integer $q$, apply \eqref{opt:gaussian-laplace} to
\eqref{opt:unit-width-moment}.  The transform of
$T^{q-1}Q(T)$ is $(-\partial_\lambda)^{q-1}\widehat Q$, and all
integrands before this differentiation are nonnegative.  Tonelli's
theorem and $\lambda^{-1/2}d\lambda=\sqrt2\,dk$ therefore give
\begin{equation}\label{opt:integer-moment-derivatives}
 m_q=2^{2q-1}q\int_0^\infty\int_0^1\int_0^1
 \cosh(k(b-a))
 \left(-\frac1k\frac{d}{dk}\right)^{q-1}
 \widehat Q_{a,b}(k)\,da\,db\,dk.
\end{equation}
The derivatives in this display act only on $\widehat Q$, not on the
factor $\cosh(k(b-a))$.

We evaluate all four moments by the same spatial integration and contour
argument.  Introduce a separate parameter for the hyperbolic cosine
weight:
\begin{align}
 J(k,h)
 &:=\int_0^1\int_0^1
     \cosh(h(b-a))\sinh(k(1-a))\sinh(kb)\,da\,db
 \notag\\
 &=\frac{\bigl[(k^2+h^2)\sinh^2k+2k^2\bigr]\cosh h
       -2k^2\cosh k-2kh\sinh k\cosh k\sinh h}
       {(k^2-h^2)^2}.
 \label{opt:spatial-generating-function}
\end{align}
The second line follows by expanding $\cosh(h(b-a))$ and integrating
the resulting products of single-variable hyperbolic functions.
The integral in the first line shows that the singularities at
$h=\pm k$ in the second line are removable.  Set
\begin{equation}\label{opt:spatial-recursion}
 B_0(k,h)=\frac{8f(k)}{\sinh^3k}J(k,h),\qquad
 B_{j+1}(k,h)=-\frac1k\partial_k B_j(k,h),\qquad
 L_q(k)=B_{q-1}(k,k).
\end{equation}
The parameter $h$ is held fixed throughout this recursion and is set
equal to $k$ only after all derivatives have been taken.  Hence
\eqref{opt:integer-moment-derivatives} becomes
\begin{equation}\label{opt:one-dimensional-moments}
 m_q=2^{2q-1}q\int_0^\infty L_q(k)\,dk.
\end{equation}

Evaluation of \eqref{opt:spatial-recursion} gives the following
four integrands. In these formulas
$f=f(k)$ and all summands are kept together at zero:
\begin{align}
 L_1={}&2f^4-2f^2+\frac{2f^3+2f}{k},
 \label{opt:raw-integrands}\\
 L_2={}&\frac{8f^5-12f^3+4f}{k}
       +\frac{20f^4-10f^2-2}{k^2}
       +\frac{12f^3+6f}{k^3},
 \notag\\
 L_3={}&\frac{40f^6-227f^4/3+119f^2/3-4}{k^2}
       +\frac{160f^5-517f^3/3+29f}{k^3}
 \notag\\
 &\quad+\frac{215f^4-230f^2/3-10}{k^4}
       +\frac{95f^3+30f}{k^5},
 \notag\\
 L_4={}&\frac{240f^7-540f^5+378f^3-78f}{k^3}
       +\frac{1320f^6-1995f^4+746f^2-35}{k^4}
 \notag\\
 &\quad+\frac{2772f^5-2379f^3+267f}{k^5}
       +\frac{2628f^4-756f^2-70}{k^6}
       +\frac{936f^3+210f}{k^7}.
 \notag
\end{align}
These identities require only differentiation of
\eqref{opt:spatial-generating-function} and the relation
\begin{equation}\label{opt:f-ode-asymptotics}
 f'=1-f^2-\frac{2f}{k},\qquad
 f(k)=\frac{k}{3}-\frac{k^3}{45}+O(k^5).
\end{equation}

\begin{lemma}[Residue evaluation]\label{opt:raw-residue-lemma}
The functions $L_1,\ldots,L_4$ are even and meromorphic, with removable
singularities at zero and poles only at $i\pi n$, $n\in\mathbb Z\setminus\{0\}$.
Writing $z_n=i\pi n$, their values and residues are
\begin{equation}\label{opt:residue-table}
\begin{array}{c|c|l}
 q&L_q(0)&\operatorname{Res}_{z=z_n}L_q(z)\\ \hline
 1&2/3&-20z_n^{-3}\\
 2&8/15&\dfrac{14}{3}z_n^{-3}+140z_n^{-5}\\
 3&167/315&-\dfrac29z_n^{-3}-\dfrac{170}{3}z_n^{-5}-1260z_n^{-7}\\
 4&598/945&6z_n^{-5}+714z_n^{-7}+13860z_n^{-9}
\end{array}
\end{equation}
Moreover,
\begin{equation}\label{opt:raw-residue-evaluation}
 \int_0^\infty L_q(k)\,dk
 =\pi i\sum_{n=1}^\infty\operatorname{Res}_{z=z_n}L_q(z)
   -\frac23\,\mathbf 1_{\{q=1\}}.
\end{equation}
\end{lemma}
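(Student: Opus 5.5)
We would argue from the explicit formulas \eqref{opt:raw-integrands}. The function $f(k)=\coth k-1/k$ is odd and meromorphic. Its poles are at $z_n=i\pi n$, $n\ne0$; the apparent pole at $0$ is removable, since $f(k)=k/3+O(k^3)$. Each summand of $L_q$ has the form $f^j/k^i$ with $i+j$ even; for instance $L_1$ contains $f^4$, $f^2$, $f^3/k$, $f/k$. Every summand is therefore even, and so is $L_q$.

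\textbf{Step 1: values and residues.} At $k=0$ the individual summands blow up, so we expand them together using $f=k/3-k^3/45+2k^5/945-\dots$, or equivalently $f=\sum_{j\ge1}2^{2j}B_{2j}k^{2j-1}/(2j)!$. The negative powers must cancel, which checks \eqref{opt:raw-integrands}, and the constant term gives $L_q(0)$.

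At $z_n$ we set $w=z-z_n$. Since $\coth$ is $i\pi$-periodic, $f=1/w+g(w)$ with $g(w)=w/3-\dots-1/(z_n+w)$. Each term $f^j/(z_n+w)^i$ has a pole of order at most $7$, and its residue is a finite Laurent-coefficient extraction. Because $g$ is given by an expansion in $1/z_n$, the result is a polynomial in $z_n^{-1}$.

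This computation is mechanical but long, especially for $q=4$. It is the step where errors are most likely. A cross-check is available: the residues must combine to odd powers of $z_n$, as the evenness of $L_q$ forces.

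\textbf{Step 2: contour integration.} Since $L_q$ is even, $\int_0^\infty L_q=\tfrac12\int_{\R}L_q$. We close the contour in the upper half-plane along the squares (or semicircles) $\Gamma_N$ of radius $R_N=\pi(N+\tfrac12)$, which pass between consecutive poles. On these contours $\coth z$ is bounded uniformly in $N$, so $f=O(1)$ there.

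Every term of $L_q$ with $q\ge2$ carries at least a factor $1/k$ and, by the table, in fact decays like $|z|^{-2}$ except for the $1/k$ terms. Here is how we handle them. Pairing $z$ with $-z$ (evenness) kills the odd-order contributions, and on the real axis $f\to1$ at rate $O(1/k)$ plus exponentially small terms.

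The problematic term is $L_1$, which tends to a nonzero constant at infinity: $L_1\to 2-2+0=0$, but only at rate $1/k^2$? Expanding $f=1-1/k+O(e^{-2k})$ gives $L_1=2f^2(f^2-1)+2f(f^2+1)/k$. The leading behaviour is $-4/k+\dots+4/k=O(k^{-2})$ on the real axis. Off the real axis, however, $\coth$ is not near $1$: on the imaginary direction $f$ is bounded but oscillating, so $L_1$ does not decay along the arc. This is the source of the boundary term $-\tfrac23$.

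\textbf{Step 3: the boundary term (main obstacle).} We would write $L_1=P(f)+O(|z|^{-2})$ on $\Gamma_N$, with $P(f)=2f^4-2f^2$, and compute $\tfrac12\int_{\Gamma_N}P(f(z))\,dz$ directly. We substitute $u=\coth z$ on the arc, or equivalently integrate along the vertical segments $\pm R_N+iy$ and the horizontal segment at height $R_N$. On the vertical segments $\coth\to\pm1$ exponentially. On the horizontal segment $\coth(x+iR_N)=\tanh x$ is real, so the contribution reduces to $-\tfrac12\int_{\R}(2\tanh^4x-2\tanh^2x)\,dx$ plus $o(1)$ corrections from the $-1/z$ part of $f$. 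This equals $\int_{\R}\tanh^2x\,\mathrm{sech}^2x\,dx=2/3$, which produces the $-\tfrac23$ for $q=1$.

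For $q\ge2$ every term carries a factor $k^{-1}$ or smaller. The horizontal contribution is then $O(1/R_N)$, and the vertical ones vanish as well. The vertical pieces for $q=1$ also need care: the $1/k$ terms there are odd under the pairing and cancel.

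Letting $N\to\infty$ and using $\tfrac12\cdot2\pi i\sum_{n\ge1}\operatorname{Res}$ gives \eqref{opt:raw-residue-evaluation}. Here the factor $\tfrac12$ from evenness cancels the factor $2$ in $2\pi i$, leaving $\pi i\sum$. The residue series converges absolutely because the residues are $O(n^{-3})$.
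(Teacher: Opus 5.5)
Your outline is the paper's: residues via $f(z_n+w)=w^{-1}+\tfrac w3-\dots-(z_n+w)^{-1}$, an upper contour at height $R_N=\pi(N+\tfrac12)$ where $\coth(x+iR_N)=\tanh x$, and a boundary term coming from $\int_{\R}(2\tanh^4x-2\tanh^2x)\,dx=-\tfrac43$. However, the contour estimate has a genuine gap. You never correctly control the parts of $L_1$ and $L_2$ of order $|z|^{-1}$ on the closing contour.

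\begin{itemize}
\item For $q=2$, the inference ``every term carries a factor $k^{-1}$, so the horizontal contribution is $O(1/R_N)$'' does not follow. With $f$ merely bounded, $(8f^5-12f^3+4f)/z$ is $O(R_N^{-1})$ on a segment of length $\asymp R_N$, which gives only $O(1)$.
\item For $q=1$, the decomposition $L_1=P(f)+O(|z|^{-2})$ is false. The term $2(f^3+f)/z$ is of exact order $|z|^{-1}$ wherever $\coth z\approx\pm1$. The $-1/z$ correction to $P(f)=2f^4-2f^2$ is not $o(1)$ by itself either: writing $u=\coth z$, its coefficient is $4u-8u^3$, which equals $\mp4$ at $u=\pm1$.
\item The ``pairing $z$ with $-z$'' does not rescue this. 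The map $z\mapsto-z$ sends the upper contour into the lower half-plane. The symmetry actually available, $z\mapsto-\bar z$, gives, for odd $c$,
\[
\int_{-R_N}^{R_N}\frac{c(\tanh x)}{x+iR_N}\,dx=\int_0^{R_N}\frac{2x\,c(\tanh x)}{x^2+R_N^2}\,dx\longrightarrow c(1)\log2 .
\]
This vanishes only if $c(1)=0$. So the piece $(4u-8u^3)/z$ alone contributes about $4\log2$ in absolute value on your top edge, and similar $O(1)$ amounts on your vertical sides. On a semicircle one gets $i\pi c$ instead.
\end{itemize}

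The missing idea is the one the paper uses. Substitute $f=u-z^{-1}$ with $u=\coth z$, and expand $L_q$ exactly in powers of $z^{-1}$ with coefficients polynomial in $u$. The coefficient of $z^{-1}$ is $6u(1-u^2)$ for $q=1$ and $8u^5-12u^3+4u=4u(u^2-1)(2u^2-1)$ for $q=2$. For $q=3,4$ there is no $z^{-1}$ term, so your bound suffices there. Both coefficients vanish at $u=\pm1$. This is exactly the cancellation $-4/k+4/k$ you noticed on the real axis, but it must be used on the contour itself. Consequently these terms are $O(e^{-2R_N})$ on your vertical sides, where $\coth z=\pm1+O(e^{-2R_N})$. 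On the top edge they are $O(\operatorname{sech}^2x)/|z|$, which is integrable in $x$, so their contribution is $O(1/R_N)$. All remaining terms are $O(|z|^{-2})$ with bounded coefficients. Only $2u^4-2u^2$ survives, which produces the $-\tfrac23$. With this correction your square contour works as well as the paper's rectangle, where the vertical sides are first sent to $\Re z=\pm\infty$ at fixed $N$.
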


\begin{proof}
Since $f$ is odd, every displayed $L_q$ is even.  Removability at zero
also follows directly from the integral defining $J$: the function
$B_0(k,h)$ is analytic in $(k^2,h^2)$ near $(0,0)$, and
$-k^{-1}\partial_k=-2\partial_{k^2}$ preserves that property.
Taylor expansion gives the second column of
\eqref{opt:residue-table}.  At a nonzero pole put $z=z_n+w$ and use
\begin{equation}\label{opt:pole-expansion}
 f(z_n+w)=\frac1w+\frac w3-\frac{w^3}{45}
           +\frac{2w^5}{945}-\frac1{z_n+w}+O(w^7).
\end{equation}
Substituting this finite expansion into \eqref{opt:raw-integrands} and
collecting the coefficient of $w^{-1}$ gives the third column.  For
example, substitution into
$L_1=2f^4-2f^2+2(f^3+f)/z$ cancels the $z_n^{-1}$ terms and leaves
$-20z_n^{-3}$.  The expansion through $w^5$ suffices for $L_4$, whose
highest power of $f$ is seven.  In particular, every residue series
in \eqref{opt:raw-residue-evaluation} is absolutely convergent.

To justify the contour and its boundary term, integrate $L_q$ around
the rectangle with vertices $-M,M,M+iR_N,-M+iR_N$, where
$R_N=\pi(N+1/2)$.  For fixed $N$, the two vertical integrals tend to
zero as $M\to\infty$.  Indeed, uniformly on those sides,
$\coth z=\operatorname{sgn}(\Re z)+O(e^{-2M})$;
substitution into \eqref{opt:raw-integrands} gives decay at least
$O(M^{-1})$ for $q\ge2$ and $O(M^{-2})+O(e^{-2M})$ for $q=1$.
The residue theorem \cite[\S1.10(iv)]{DLMF} therefore gives
\begin{equation}\label{opt:rectangle-identity}
 2\int_0^\infty L_q(x)\,dx
 -\int_{\R}L_q(x+iR_N)\,dx
 =2\pi i\sum_{n=1}^{N}\operatorname{Res}_{z=z_n}L_q(z).
\end{equation}

On the top edge, $\coth(x+iR_N)=\tanh x$.  Put $z=x+iR_N$ and
$u=\tanh x$, so $f(z)=u-z^{-1}$.  For $q=1$, expansion gives the exact
identity
\[
 L_1(z)=2u^4-2u^2
       +\frac{6u(1-u^2)}{z}
       +\frac{6u^2-4}{z^2}-\frac{2u}{z^3}.
\]
The integral of its first term is
\[
 \int_{\R}(2\tanh^4x-2\tanh^2x)\,dx
   =-2\int_{-1}^1u^2\,du=-\frac43.
\]
The remaining integrals are $O(R_N^{-1})$: the numerator of the
$z^{-1}$ term is integrable, and
$\int_{\R}|x+iR_N|^{-p}\,dx=O(R_N^{1-p})$ for $p>1$.
For $q=2$ the coefficient of $z^{-1}$ is
\[
 8u^5-12u^3+4u=4u(u^2-1)(2u^2-1),
\]
which is integrable in $x$; all remaining coefficients are bounded
and multiply $z^{-p}$ with $p\ge2$.  Its top integral therefore tends
to zero.  The formulas for $q=3,4$ have only powers $z^{-p}$ with
$p\ge2$, again with bounded coefficients after substituting
$f=u-z^{-1}$, and give the same conclusion.
Letting $N\to\infty$ in \eqref{opt:rectangle-identity} proves
\eqref{opt:raw-residue-evaluation}.
\end{proof}

Finally, for $j\ge1$,
\[
 \pi i\sum_{n=1}^\infty(i\pi n)^{-(2j+1)}
     =(-1)^j\frac{\zeta(2j+1)}{\pi^{2j}}.
\]
Multiplication of \eqref{opt:raw-residue-evaluation} by
$2^{2q-1}q$, followed by substitution of \eqref{opt:residue-table},
gives the four formulas in \eqref{const:optimal-moments}.
In particular, the rational term $-4/3$ in $m_1$ is exactly the
nonvanishing top-edge contribution; the other three moments have no
such boundary term.

\section{The anchored functional}
\label{an:section}

We calculate the distribution of $D_{\mathrm{an}}(W,1)$ and all positive
integer moments of $\tauan$ from Definition~\ref{def:lifetimes}.

\subsection{Time inversion and an expanding strip}

\begin{lemma}[A sum of dependent drifted suprema]
\label{an:time-inversion}
For a standard Brownian motion $X$, define
\begin{equation}
 A=\sup_{u\ge0}(X_u-u),\qquad
 B=\sup_{u\ge0}(-X_u-u),\qquad S=A+B.
 \label{an:suprema}
\end{equation}
Then
\begin{equation}
 D_{\mathrm{an}}(W,1)^2\ \stackrel{d}{=}\ \frac S2,
 \qquad \tauan\ \stackrel{d}{=}\ \frac2S.
 \label{an:inverse-sum}
\end{equation}
Both $A$ and $B$ have exponential distributions of rate $2$.
\end{lemma}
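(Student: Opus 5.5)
The plan is to use Brownian time inversion to turn the uniform fit on $[0,1]$ into a two-sided condition on $[1,\infty)$, and then use the Markov property and Brownian scaling. Let $X$ be the time inversion of $W$: $X_u=uW_{1/u}$ for $u>0$ and $X_0=0$. Then $X$ is a standard Brownian motion and $W_t=tX_{1/t}$ for $t\in(0,1]$. By continuity, the supremum in \eqref{def:anchored} may be taken over $t\in(0,1]$, since the residual at $t=0$ is zero.

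\textbf{Step 1 (reduce to a drift condition).} Substitute $u=1/t$. Then $|W_t-at|\le c$ becomes $|X_u-a|\le cu$ for all $u\ge1$. Hence $D_{\mathrm{an}}(W,1)\le c$ if and only if some $a\in\R$ satisfies
\[
 \sup_{u\ge1}(X_u-cu)\le a\le\inf_{u\ge1}(X_u+cu).
\]
The infimum in \eqref{def:anchored} is attained, so the case of equality is included. Such an $a$ exists if and only if
\[
 \sup_{u\ge1}(X_u-cu)+\sup_{u\ge1}(-X_u-cu)\le0 .
\]

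\textbf{Step 2 (remove the value at time one).} Write $u=1+s$ and $Y_s=X_{1+s}-X_1$. By the Markov property, $Y$ is a standard Brownian motion. In the sum above the terms $\pm X_1$ cancel and the two constants $-c$ combine to $-2c$, so the condition becomes
\[
 \sup_{s\ge0}(Y_s-cs)+\sup_{s\ge0}(-Y_s-cs)\le 2c .
\]

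\textbf{Step 3 (scale the drift to one).} Put $X'_v=cY_{v/c^2}$, which is again a standard Brownian motion. With $s=v/c^2$ we get $Y_s-cs=c^{-1}(X'_v-v)$. The left side of Step 2 therefore equals $c^{-1}(A'+B')$, where $A',B'$ are built from $X'$ as in \eqref{an:suprema}. The condition becomes $A'+B'\le 2c^2$. Since $(A',B')\overset d=(A,B)$ for every $c>0$, we obtain
\[
 \Pbb\{D_{\mathrm{an}}(W,1)^2\le c^2\}=\Pbb\{S/2\le c^2\},
\]
which gives the first identity in \eqref{an:inverse-sum}. The second identity then follows from \eqref{def:inverse-error} in Lemma~\ref{def:scaling}. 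Positivity of $S$ comes from Lemma~\ref{def:finiteness}, or directly from the law of $A$ below.

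\textbf{Step 4 (marginal laws).} For the classical fact that $\sup_{u\ge0}(X_u-\mu u)$ is exponential with rate $2\mu$, I would use the martingale $e^{2\mu X_u-2\mu^2u}$ stopped at the hitting time of level $x$. Optional stopping gives $\Pbb(A\ge x)=e^{-2x}$. By symmetry $-X$ is also a Brownian motion, so $B$ has the same law.

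I expect the main point needing care to be the bookkeeping in Steps 1 and 2: the attainment of the infimum, the endpoint $t=0$ (equivalently $u=\infty$), and the fact that $A$ and $B$ are dependent. No independence is needed, however, because the identity concerns only the law of the sum, and that law is determined by the single process $X'$.
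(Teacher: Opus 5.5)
Your proposal is correct and follows the paper's proof essentially step for step: time inversion, recentering at time one so that the values $\pm X_1$ cancel, joint Brownian scaling of the two dependent suprema, and the exponential martingale for the marginal laws. The only minor difference is in that last step. The paper stops the martingale at two finite barriers and lets the lower one tend to $-\infty$, while your one-barrier version should state that it uses bounded convergence of the stopped martingale together with $X_u-u\to-\infty$ almost surely.
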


\begin{proof}
The Gaussian process $\widetilde W_s=sW_{1/s}$, $s>0$, is standard
Brownian motion: its covariance is $\min(s,t)$, and its continuous
extension at zero follows from the Brownian strong law. For $e>0$,
the event $D_{\mathrm{an}}(W,1)\le e$ is equivalent to
\[
 \bigcap_{s\ge1}
 [\widetilde W_s-es,\widetilde W_s+es]\ne\varnothing.
\]
With $X_u=\widetilde W_{1+u}-\widetilde W_1$, this condition becomes
\[
 \sup_{u\ge0}(X_u-eu)+\sup_{u\ge0}(-X_u-eu)\le2e.
\]
Brownian scaling applies jointly to these two suprema, so their sum has
the distribution $S/e$. This proves the first identity in
\eqref{an:inverse-sum}. The one-sided hitting probability
$\Pbb(\sup_{u\ge0}(X_u-u)>a)=e^{-2a}$ follows by stopping the
exponential martingale $\exp(2X_u-2u)$ at two finite barriers and then
sending the lower barrier to $-\infty$. Thus $A,B$ are positive and
finite almost surely.

The second identity in \eqref{an:inverse-sum} follows from
Lemma~\ref{def:scaling}.
\end{proof}

\begin{proposition}[The equal-slope wedge probability]
\label{an:wedge}
The equal-slope case of the classical wedge formula
\cite[Section~2]{YcartDrouilhet2016} is, for $a,b>0$,
\begin{equation}
 G(a,b):=\Pbb(A\le a,B\le b)
 =1+2\sum_{n=1}^{\infty}(-1)^n
 e^{-n^2(a+b)}\cosh\bigl(n(a-b)\bigr).
 \label{an:wedge-series}
\end{equation}
\end{proposition}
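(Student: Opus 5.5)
Since $G(a,b)=\Pbb(A\le a,B\le b)$ is the probability that $X$ never leaves the expanding strip $\{-b-u<x<a+u\}$, my plan is to compute it with the exponential martingales of $X$ and a reflection (image) argument. The key simplification is Girsanov. Under the change of measure with density $\exp(\theta X_u-\theta^2u/2)$, both boundaries are linear in $u$. For the equal-slope case, the candidate solution is a signed sum of exponential martingales
\[
 M^{(n)}_u=\exp\bigl(2n\bigl(X_u-n(a-b)\bigr)-2n^2u\bigr)\quad\text{and variants.}
\]
These correspond to the image charges of the Dirichlet problem for $\tfrac12\partial_x^2+\partial_u$ in the moving strip.

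Concretely, I will look for a function $h(u,x)$ solving $\partial_u h+\tfrac12\partial_x^2 h=0$, with $h=0$ on both lines $x=a+u$ and $x=-b-u$, and $h\to1$ as $u\to\infty$ inside the strip. The ansatz is
\[
 h(u,x)=1+\sum_{n\ne0}c_n\,e^{2n x - 2n^2 u\cdot(\text{const})+d_n u}.
\]
Each exponential $e^{\theta x-\theta^2 u/2}$ is space–time harmonic. On the line $x=a+u$ it becomes $e^{\theta a+(\theta-\theta^2/2)u}$, so matching the $u$-dependence of terms in pairs forces $\theta$ and $2-\theta$ to be paired on the upper line. On the lower line the pairing is between $\theta$ and $-2-\theta$. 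Iterating these two reflections generates the lattice $\theta\in 2\mathbb Z$, which yields exactly the terms $e^{\pm 2n x}$-type exponents. I then fix the coefficients by requiring cancellation on both lines and evaluate $h(0,0)$. After combining $\pm n$, I expect this to reproduce $1+2\sum(-1)^n e^{-n^2(a+b)}\cosh(n(a-b))$. As a sanity check, letting $b\to\infty$ should give $1-e^{-2a}$; only the $n=1$ term with $\cosh$ survives appropriately.

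To make this rigorous, I will apply optional stopping to the bounded martingale $h(u\wedge\sigma,X_{u\wedge\sigma})$, where $\sigma$ is the exit time from the strip. The series converges uniformly on the closed strip for $u\ge0$, because the factors $e^{-n^2(a+b)}$-type decay dominate the growth of the exponentials there. On $\{\sigma<\infty\}$ we have $h=0$. On $\{\sigma=\infty\}$, $X_u/u\to0$, so $h\to1$. Bounded convergence then gives $G(a,b)=h(0,0)$.

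I expect the main obstacle to be twofold. First, the bookkeeping of the image coefficients, with their signs $(-1)^n$, must be checked. Second, I must verify that the image series is bounded on the strip uniformly in $u$. This is needed so that optional stopping at $\sigma=\infty$ is legitimate. The alternative route is to simply cite the wedge formula of \cite{YcartDrouilhet2016} (Doob's formula) and specialize to equal slopes $1$. I would present that as the primary justification, with the martingale computation as verification.
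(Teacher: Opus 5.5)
Your approach is correct, but it is not the paper's route. The paper uses the deterministic change $\rho(t)=t/(s(s+2t))$, $Z_{\rho(t)}=(X_t+b+t)/(s+2t)$ with $s=a+b$. This turns the path in the expanding strip into a Brownian bridge of duration $1/(2s)$, from $b/s$ to $1/2$, that must stay in $[0,1]$. Then $G=K_1/p_T$ is a ratio of the Dirichlet and free kernels, and the classical image series for $K_1$ gives the theta series. You instead write down a bounded space-time harmonic function on the moving strip and apply optional stopping. This is the two-sided version of the exponential-martingale argument the paper uses for $\Pbb(A>a)=e^{-2a}$.

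The steps you flag as obstacles do go through. With $\theta=2n$, cancellation on the upper and lower lines reads $c_{1-n}=-c_ne^{(4n-2)a}$ and $c_{-1-n}=-c_ne^{-(4n+2)b}$. These relations link $\mathbb Z$ into a single bi-infinite chain, so with $c_0=1$ they are consistent and force $c_n=(-1)^ne^{-n^2(a+b)-n(a-b)}$, that is,
\[
 h(u,x)=\sum_{n\in\mathbb Z}(-1)^n e^{-n^2(a+b+2u)-n(a-b)+2nx}.
\]
Then $h(0,0)$ is the stated series after $n\mapsto-n$. The constant in your displayed $M^{(n)}$ is not this one, and no Girsanov step is needed; you only use that $e^{\theta x-\theta^2u/2}$ is space-time harmonic.

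On the closed strip the $n$th term is at most $e^{-|n|(|n|-1)(a+b+2u)}$. This bound gives three things: absolute convergence on the boundary lines, which legitimizes the pairwise cancellation; a uniform bound for the stopped local martingale; and, together with $X_u=o(u)$, the limit $h\to1$ on the survival event by dominated convergence. Take $\sigma$ to be the first time the path is strictly outside the closed strip, so that $\{\sigma=\infty\}=\{A\le a,B\le b\}$ exactly. If you use the open strip instead, you obtain $\Pbb(A<a,B<b)$ and must invoke the atomless exponential laws.

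As for what each route buys: yours is elementary and self-contained, but you must discover the coefficients. The paper's transformation costs a covariance check and endpoint conditioning, but then reduces everything to a standard heat-kernel identity with nothing to solve for, which makes the theta-function structure transparent.
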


\begin{proof}
The event in question is
$-b-t\le X_t\le a+t$ for every $t\ge0$. Write $s=a+b$, set
$T=1/(2s)$, and make the deterministic transformation
\begin{equation}
 \rho(t)=\frac{t}{s(s+2t)},\qquad
 Z_{\rho(t)}=\frac{X_t+b+t}{s+2t}.
 \label{an:bridge-transform}
\end{equation}
Its mean is the linear interpolation from $b/s$ to $1/2$ over $[0,T]$.
For $t\le u$, its covariance is
\[
 \frac{t}{(s+2t)(s+2u)}
 =\rho(t)\left(1-\frac{\rho(u)}T\right).
\]
Consequently $Z$ is a standard Brownian bridge of duration $T$, from
$b/s$ to $1/2$. It extends continuously to that endpoint because
$X_t/t\to0$ almost surely. The wedge condition becomes the condition
that the bridge remains in $[0,1]$.

Let $p_T(x,y)=(2\pi T)^{-1/2}\exp(-(y-x)^2/(2T))$, and let $K_1$
be the Dirichlet heat kernel on $(0,1)$ for the generator
$\tfrac12\partial_{xx}$. Conditioning Brownian motion on its endpoint
gives
\[
 G(a,b)=\frac{K_1(T;b/s,1/2)}{p_T(b/s,1/2)}.
\]
The reflection formula
\[
 K_1(T;x,y)=\sum_{j\in\mathbb Z}
 \{p_T(x,y+2j)-p_T(x,-y+2j)\}
\]
is absolutely convergent at fixed $T>0$. Dividing by the free kernel
and separating the even and odd images yields
\[
 G(a,b)=\sum_{n\in\mathbb Z}(-1)^n
 \exp\{-n^2s+n(a-b)\},
\]
which is \eqref{an:wedge-series}. The distinction between staying in
the closed and open strip has probability zero, since the initial and
terminal points are interior and a Brownian bridge cannot touch a
boundary without crossing it with positive probability.
\end{proof}

\subsection{The distribution and its Laplace transform}

\begin{theorem}[An explicit survival function]
\label{an:distribution}
For $s,t>0$,
\begin{align}
 \Pbb(S\le s)
 &=\sum_{n=0}^{\infty}(-1)^n(2n+1)e^{-n(n+1)s}
   =\prod_{n=1}^{\infty}(1-e^{-2ns})^3,
 \label{an:sum-cdf}\\
 \Pbb(\tauan>t)
 &=\prod_{n=1}^{\infty}(1-e^{-4n/t})^3.
 \label{an:survival}
\end{align}
In particular, $S$ and $\tauan$ have no atoms on $(0,\infty)$.
\end{theorem}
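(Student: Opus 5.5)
The plan is to derive the law of $S=A+B$ from the joint distribution function $G(a,b)$ in Proposition~\ref{an:wedge}. Then the Jacobi triple product gives the product form, and Lemma~\ref{an:time-inversion} transfers the result to $\tauan$.

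First I would extract the joint density of $(A,B)$ from \eqref{an:wedge-series}. Writing $\cosh(n(a-b))$ as a sum of two exponentials,
\[
 G(a,b)=\sum_{n\in\mathbb Z}(-1)^n e^{-(n^2-n)a-(n^2+n)b}.
\]
The mixed derivative $\partial_a\partial_b G$ is then
\[
 g(a,b)=\sum_{n\in\mathbb Z}(-1)^n n^2(n^2-1)e^{-n(n-1)a-n(n+1)b}.
\]
Termwise differentiation is justified because the series converges absolutely and uniformly, together with its derivatives, on $\{a,b\ge\delta\}$. Lemma~\ref{an:time-inversion} says $A$ and $B$ are exponential, hence have no atoms. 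Since $G$ is smooth on the open quadrant, the law of $(A,B)$ has density $g$ there. I would check that no mass sits on the axes: for instance, $\Pbb(A=0)=0$ because $A$ is exponential.

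Next I would compute $\Pbb(S\le s)=\int_0^s\int_0^{s-a} g(a,b)\,db\,da$ term by term. The $n=0,\pm1$ terms vanish. For $|n|\ge2$, put $\alpha=n(n-1)$ and $\beta=n(n+1)$. The double integral of $e^{-\alpha a-\beta b}$ over the triangle is
\[
 \frac{1}{\alpha\beta}-\frac{e^{-\alpha s}}{\alpha(\beta-\alpha)}+\frac{e^{-\beta s}}{\beta(\beta-\alpha)},
\]
with $\beta-\alpha=2n$ and $\alpha\beta=n^2(n^2-1)$. Multiplying by $n^2(n^2-1)$ gives
\[
 1-\frac{(n+1)}{2}e^{-n(n-1)s}+\frac{(n-1)}{2}e^{-n(n+1)s}.
\]
Summing the constant terms $(-1)^n$ is problematic, since that series diverges. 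So I would instead integrate near the corner at the level of $G$, where everything is absolutely convergent. Concretely,
\[
 \Pbb(S\le s)=\int_0^s \partial_a G(a,s-a)\,da.
\]
This holds because, for fixed $a$, $\partial_a G(a,b)$ is the density of $A$ at $a$ times $\Pbb(B\le b\mid A=a)$.

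For the terms of $\partial_aG(a,b)=\sum_n(-1)^{n+1}n(n-1)e^{-n(n-1)a-n(n+1)b}$, substituting $b=s-a$ makes the exponent equal to $-n(n+1)s+2na$. Integrating in $a$ over $[0,s]$ gives
\[
 \frac{(n-1)}{2}(-1)^{n+1}\bigl(e^{-n(n-1)s}-e^{-n(n+1)s}\bigr).
\]
Reindexing, the $e^{-m(m+1)s}$ terms collect into $\sum_{m\ge0}(-1)^m(2m+1)e^{-m(m+1)s}$, with $n=m+1$ and $n=-m$ contributing together. The $n=0,1$ terms vanish. The convergence near $a=0$ and $a=s$ must be handled by dominated convergence: on $[0,s]$ the summands are bounded by $n^2e^{-(|n|-1)|n|s}$-type quantities, which suffice. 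I expect this interchange and the bookkeeping to be the main obstacle.

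The product form is Jacobi's identity $\prod(1-q^n)^3=\sum_{m\ge0}(-1)^m(2m+1)q^{m(m+1)/2}$ with $q=e^{-2s}$. This follows from the triple product by differentiation, and I would cite it as classical. Finally, $\tauan\overset d=2/S$ gives
\[
 \Pbb(\tauan>t)=\Pbb(S<2/t).
\]
The resulting product is continuous in $s$, so $S$ has no atoms and closed versus open inequalities do not matter. Substituting $s=2/t$ gives \eqref{an:survival}, and atomlessness of $\tauan$ follows.
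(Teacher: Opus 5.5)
Your proof is correct and is essentially the paper's: both use the wedge series, the absence of mass on the axes, termwise integration along $a+b=s$, Jacobi's identity, and $\tauan\overset d=2/S$. The only variation is that you integrate $\partial_aG(a,s-a)$ to get $\Pbb(S\le s)$ directly, whereas the paper integrates $\partial_a\partial_bG(a,s-a)$ to get $f_S$ and then integrates over $(s,\infty)$.

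One small bookkeeping point: each $e^{-m(m+1)s}$ collects four contributions, not two. These are $n=m+1$ and $n=-m$ from the first exponential, and $n=m$ and $n=-m-1$ from the second, with each pair giving $(-1)^m(2m+1)/2$.
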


\begin{proof}
Use coordinates $s=a+b$ and $d=a-b$, so that
$\partial_a\partial_b=\partial_s^2-\partial_d^2$.
For each compact subinterval of $s>0$, the series
\eqref{an:wedge-series} and the derivatives needed here converge
absolutely and uniformly for $|d|\le s$.
The marginal exponential laws imply that no probability is carried by
the coordinate axes. Integrating the joint density along $a+b=s$
therefore gives
\begin{align*}
 f_S(s)
 &=\int_0^s\partial_a\partial_bG(a,s-a)\,da\\
 &=\sum_{n\ge1}(-1)^n n(n^2-1)
   \{e^{-n(n-1)s}-e^{-n(n+1)s}\}\\
 &=\sum_{n\ge1}(-1)^{n+1}n(n+1)(2n+1)e^{-n(n+1)s}.
\end{align*}
Integration from $s$ to infinity gives the series in
\eqref{an:sum-cdf}. The identity between this series and the product
follows by differentiating at zero the series and product formulas
for the Jacobi theta function $\theta_1$, and cancelling their common
factor; see \cite[\S\S20.2 and 20.5]{DLMF}. Equivalently, it is
Jacobi's identity
\[
 \sum_{n\ge0}(-1)^n(2n+1)q^{n(n+1)/2}
 =\prod_{n\ge1}(1-q^n)^3,
 \qquad 0<q<1,
\]
with $q=e^{-2s}$. The final assertion and
\eqref{an:survival} now follow from \eqref{an:inverse-sum}.
\end{proof}

\begin{proposition}[Laplace transform]
\label{an:laplace-proposition}
For $z>0$,
\begin{equation}
 L_S(z):=\E[e^{-zS}]
 =\frac{\pi z}{\cos\bigl(\frac\pi2\sqrt{1-4z}\bigr)}.
 \label{an:laplace}
\end{equation}
For $z>1/4$ the denominator is understood as
$\cosh\bigl(\frac\pi2\sqrt{4z-1}\bigr)$; the expression extends
continuously to $L_S(0)=1$.
\end{proposition}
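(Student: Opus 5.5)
We compute $L_S$ by integrating the density of $S$ from Theorem~\ref{an:distribution} term by term, and then identify the resulting series with the partial-fraction expansion of a secant. The proof of Theorem~\ref{an:distribution} gives
\[
 f_S(s)=\sum_{n\ge1}(-1)^{n+1}n(n+1)(2n+1)e^{-n(n+1)s}.
\]
For fixed $z>0$ this series converges absolutely after multiplication by $e^{-zs}$, uniformly on $[s_0,\infty)$. Near $s=0$, termwise integration is not dominated directly, so we would instead integrate by parts using the distribution function. Since the series $\Pbb(S\le s)=\sum_{n\ge0}(-1)^n(2n+1)e^{-n(n+1)s}$ is bounded by $1$, we have $L_S(z)=z\int_0^\infty e^{-zs}\Pbb(S\le s)\,ds$. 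We then interchange sum and integral through an Abel-type limit. One route is to integrate over $s\ge\varepsilon$, where convergence is absolute, and let $\varepsilon\to0$. The resulting series is alternating with terms that are eventually monotone in $n$, so it converges uniformly in $\varepsilon$. This gives
\[
 L_S(z)=z\sum_{n\ge0}\frac{(-1)^n(2n+1)}{n(n+1)+z}.
\]

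Next we factor $n(n+1)+z=(n+\tfrac12)^2-\tfrac14(1-4z)$ and set $c=\tfrac12\sqrt{1-4z}$, taking $c$ imaginary when $z>1/4$. The series becomes
\[
 2z\sum_{n\ge0}\frac{(-1)^n(2n+1)}{(2n+1)^2/4\cdot4-4c^2}\cdot 2 .
\]
We would bookkeep the constants carefully and compare with the Mittag-Leffler expansion of the secant:
\[
 \frac{\pi}{4\cos(\pi x/2)}=\sum_{n\ge0}\frac{(-1)^n(2n+1)}{(2n+1)^2-x^2}.
\]
Applying this with $x=2c=\sqrt{1-4z}$ and using $(2n+1)^2-x^2=4(n(n+1)+z)$, the sum equals $\tfrac14\cdot\pi/\cos(\pi x/2)\cdot 4=\pi/\cos(\tfrac\pi2\sqrt{1-4z})$ after multiplying by the factor $4$ from the denominator. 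Multiplying by $z$ then yields \eqref{an:laplace}. For $z>1/4$ the identity $\cos(iy)=\cosh y$ gives the stated form, and as $z\to0$ the denominator tends to $\cos(\pi/2)=0$ linearly with slope $\pi$ in $z$, which gives $L_S(0)=1$.

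The main obstacle is justifying the termwise Laplace integration near $s=0$, where the series for $f_S$ is not absolutely integrable. If the $\varepsilon$-argument proves awkward, we would instead verify the identity for $z$ in an open interval using the product form \eqref{an:sum-cdf}. Alternatively, we could compute $\E e^{-zS}$ directly from $G(a,b)$ by a Laplace transform in $a$ and $b$, where $e^{-n^2 s}$ terms integrate cleanly against $e^{-z(a+b)}$ and give convergent sums, and then extend by analyticity in $z$. Since both sides are analytic in $\Re z>0$, agreement on an interval suffices.
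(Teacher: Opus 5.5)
Your proposal is correct and follows essentially the same route as the paper: integration by parts against the series for $\Pbb(S\le s)$, truncation to $[\varepsilon,\infty)$ with an alternating-series remainder bound uniform in $\varepsilon$, and then a trigonometric partial-fraction expansion. The only difference is cosmetic. You apply the secant expansion with $x=\sqrt{1-4z}$ via $(2n+1)^2-x^2=4(n(n+1)+z)$. The paper instead splits $(2n+1)/(z+n(n+1))=1/(n+\rho)+1/(n+1-\rho)$ with $\rho=(1-\sqrt{1-4z})/2$ and uses the cosecant expansion, which is the same identity shifted by a quarter period; the fallback routes you list are not needed.
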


\begin{proof}
Integration by parts gives
$L_S(z)=z\int_0^\infty e^{-zs}\Pbb(S\le s)\,ds$.
To justify use of the series \eqref{an:sum-cdf}, first restrict the
integral to $[\delta,\infty)$, where $\delta>0$. Absolute convergence
there gives
\[
 z\sum_{n\ge0}\frac{(-1)^n(2n+1)
 e^{-[z+n(n+1)]\delta}}{z+n(n+1)}.
\]
The positive sequence $(2n+1)/(z+n(n+1))$ is eventually decreasing
to zero. After multiplication by the exponential factor it remains
eventually decreasing, and its alternating-series remainder is
bounded uniformly for $\delta\ge0$ by the first omitted coefficient
at $\delta=0$. Passing to $\delta\downarrow0$ is therefore legitimate,
and gives the conditionally convergent identity
\begin{equation}
 L_S(z)=z\sum_{n\ge0}
 \frac{(-1)^n(2n+1)}{z+n(n+1)}.
 \label{an:conditional-series}
\end{equation}
Put $\rho=(1-\sqrt{1-4z})/2$. Then $\rho(1-\rho)=z$, and
\[
 \frac{2n+1}{z+n(n+1)}
 =\frac1{n+\rho}+\frac1{n+1-\rho}.
\]
The paired partial-fraction expansion of the cosecant
\cite[Eq.~4.22.5]{DLMF} gives
\[
 \sum_{n\ge0}(-1)^n
 \left(\frac1{n+\rho}+\frac1{n+1-\rho}\right)
 =\frac\pi{\sin(\pi\rho)}.
\]
Since $\sin(\pi\rho)=\cos(\tfrac\pi2\sqrt{1-4z})$, this proves
\eqref{an:laplace}.
\end{proof}

\begin{corollary}[An exponential-series representation]
\label{an:exponential-series}
If $E_n$, $n\ge1$, are independent exponential random variables with
rates $n(n+1)$, then
\begin{equation}
 S\stackrel d=\sum_{n=1}^{\infty}E_n.
 \label{an:exponential-sum}
\end{equation}
\end{corollary}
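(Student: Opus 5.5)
The plan is to compare Laplace transforms. The right-hand side of \eqref{an:exponential-sum} has independent summands, and $\sum_n 1/(n(n+1))=1<\infty$, so the series converges almost surely and in $L^1$. For $z>0$, independence and monotone convergence then give
\[
 \E\Bigl[e^{-z\sum_{n\ge1}E_n}\Bigr]
 =\prod_{n=1}^\infty\frac{n(n+1)}{n(n+1)+z}.
\]
This infinite product converges, because $\sum_n z/(n(n+1))<\infty$. By Proposition~\ref{an:laplace-proposition}, it therefore suffices to prove
\[
 \prod_{n=1}^\infty\Bigl(1+\frac{z}{n(n+1)}\Bigr)
 =\frac{\cos\bigl(\frac\pi2\sqrt{1-4z}\bigr)}{\pi z},
\]
at least for $0<z<1/4$. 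Laplace transforms on a set with an accumulation point determine a law on $[0,\infty)$, since both sides are analytic in $z$ with $\Re z>0$.

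To evaluate the product, I reuse the factorization from the proof of Proposition~\ref{an:laplace-proposition}. Put $\rho=(1-\sqrt{1-4z})/2$, so that $\rho(1-\rho)=z$. Then
\[
 n(n+1)+z=(n+\rho)(n+1-\rho).
\]
Hence the partial product is
\[
 \prod_{n=1}^N\frac{(n+\rho)(n+1-\rho)}{n(n+1)}
 =\prod_{n=1}^N\Bigl(1+\frac\rho n\Bigr)\prod_{n=2}^{N+1}\Bigl(1-\frac\rho n\Bigr)\cdot\frac{N+1}{N+1}.
\]
A cleaner route is to write it in Gamma functions:
\[
 \prod_{n=1}^N\frac{(n+\rho)(n+1-\rho)}{n(n+1)}
 =\frac{\Gamma(N+1+\rho)\,\Gamma(N+2-\rho)}{\Gamma(1+\rho)\,\Gamma(2-\rho)\,N!\,(N+1)!}.
\]
As $N\to\infty$, Stirling's formula, in the form $\Gamma(N+c)/\Gamma(N)\sim N^{c}$, shows that the numerator ratio tends to $1$, because the exponents satisfy $\rho+(1-\rho)=1=0+1$. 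The limit is therefore
\[
 \frac{1}{\Gamma(1+\rho)\,\Gamma(2-\rho)}
 =\frac{1}{\rho(1-\rho)\,\Gamma(\rho)\,\Gamma(1-\rho)}
 =\frac{\sin(\pi\rho)}{\pi z},
\]
using the reflection formula. Since $\sin(\pi\rho)=\cos(\tfrac\pi2\sqrt{1-4z})$, as already noted in that proof, the product equals $1/L_S(z)$, and the transforms agree.

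I expect the only delicate points to be the bookkeeping in the Gamma-function limit and the justification that the two Laplace transforms coincide on an interval, which then forces equality in law. Both are routine. If one prefers to avoid the analytic-continuation remark, the identity also holds directly for all $z>0$ by continuation of the same formula, with $\cosh$ in place of $\cos$ for $z>1/4$, matching the convention stated in Proposition~\ref{an:laplace-proposition}.
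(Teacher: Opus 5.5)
Your proof is correct and takes the paper's route: compare Laplace transforms, factor $n(n+1)+z=(n+\rho)(n+1-\rho)$, and use the Gamma-function product with the reflection formula to obtain $\prod_{n\ge1}\bigl(1+z/(n(n+1))\bigr)=\sin(\pi\rho)/(\pi z)=1/L_S(z)$. You write out the Stirling limit that the paper only cites. Restricting to $0<z<1/4$ and then invoking analyticity is a harmless variant of the paper's direct use of all $z>0$.
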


\begin{proof}
The sum is finite almost surely because
$\sum_{n\ge1}\E E_n=\sum_{n\ge1}1/(n(n+1))=1$.
For $z>0$, put $\rho=(1-\sqrt{1-4z})/2$. The Euler product,
or the gamma reflection formula applied to the
factors $(n+\rho)(n+1-\rho)/(n(n+1))$, gives
\[
 \prod_{n\ge1}\left(1+\frac z{n(n+1)}\right)
 =\frac{\sin(\pi\rho)}{\pi z}.
\]
The Laplace transform of the sum is the reciprocal product, which
agrees with \eqref{an:laplace}. Uniqueness of Laplace transforms
proves the assertion.
\end{proof}

The absorption time $T_{\mathrm K}$ of Kingman's coalescent started
with infinitely many lineages and pairwise merger rate one is a sum
of independent exponentials of rates $k(k-1)/2$, $k\ge2$
\cite[Section~2]{MoehlePitters2015}. Thus
\begin{equation}
 S\stackrel d=\frac{T_{\mathrm K}}2,
 \qquad \tauan\stackrel d=\frac4{T_{\mathrm K}}.
 \label{an:kingman}
\end{equation}
Thus the series in \eqref{an:sum-cdf} is the classical Kingman
distribution in a rescaled variable. Positive integer moments of
$T_{\mathrm K}$ are evaluated in \cite[Theorem~2.2]{MoehlePitters2015};
single-integral representations are given in
\cite[Theorem~2.1]{PoganyNadarajah2017}.
The positive moments of $\tauan$ calculated below are instead the
scaled inverse moments $4^q\E[T_{\mathrm K}^{-q}]$, $q\ge1$.

\subsection{Integer moments}

\begin{theorem}[Moment formula]
\label{an:moments}
For a positive integer $q$, define the even polynomial
\begin{align}
 P_q(y)
 &=\operatorname{Im}\{(y+i)(y^2+2iy)^q\}\notag\\
 &=\sum_{\ell=0}^{\lfloor q/2\rfloor}
 (-1)^\ell4^\ell
 \left\{\binom q{2\ell}+2\binom q{2\ell+1}\right\}
 y^{2(q-\ell)},
 \label{an:moment-polynomial}
\end{align}
where a binomial coefficient outside its usual range is zero. Then
\begin{equation}
 m_{q,\mathrm{an}}
 =\frac\pi{2^{q+1}\Gamma(q)}
 \int_0^\infty\frac{P_q(y)}{\sinh(\pi y/2)}\,dy.
 \label{an:moment-integral}
\end{equation}
In particular, every positive integer moment is a finite rational
linear combination of $\zeta(2j+1)/\pi^{2j}$.
\end{theorem}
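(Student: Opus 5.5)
We start from $\tauan\stackrel d=2/S$ (Lemma~\ref{an:time-inversion}), so $m_{q,\mathrm{an}}=2^q\E S^{-q}$. For a positive random variable the negative moments come from the Laplace transform through
\[
 \E S^{-q}=\frac1{\Gamma(q)}\int_0^\infty z^{q-1}L_S(z)\,dz .
\]
This follows from $s^{-q}=\Gamma(q)^{-1}\int_0^\infty z^{q-1}e^{-zs}\,dz$ and Tonelli. Proposition~\ref{an:laplace-proposition} supplies $L_S$ in closed form. We then get
\[
 m_{q,\mathrm{an}}=\frac{2^q\pi}{\Gamma(q)}\int_0^\infty \frac{z^q}{\cos(\tfrac\pi2\sqrt{1-4z})}\,dz .
\]
Convergence at infinity is clear, since the denominator grows like $\cosh(\pi\sqrt z)$.

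Next we change variables. For $z>1/4$ we put $4z=1+y^2$, with $y>0$ and $dz=y\,dy/2$. The denominator then becomes $\cosh(\pi y/2)$. On $0<z<1/4$ we put $\sqrt{1-4z}=x$ and obtain an integral against $1/\cos(\pi x/2)$ over $x\in(0,1)$. This piece is not yet of the target form.

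The cleaner route is to shift variables so that $\cosh$ becomes $\sinh$. I would try the substitution $\sqrt{4z-1}=y-i$, i.e.\ $4z=(y-i)^2+1=y^2-2iy$, and move the contour. Concretely, write the integral as $\int z^q/\cosh(\tfrac\pi2 w)\,dz$ with $w=\sqrt{4z-1}$ and $4z=1+w^2$. This is $\tfrac12\int z^q w/\cosh(\pi w/2)\,dw$ along a path starting at $w=i$ (where $z=0$) and running out to $+\infty$. The integrand is even in $w$, and its poles at $w=\pm i(2k+1)$ lie off the line $\operatorname{Im}w=1$. We deform the path to the horizontal line $w=y+i$, $y\ge0$. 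Then $\cosh(\tfrac\pi2(y+i))=i\sinh(\pi y/2)$, $1+w^2=y^2+2iy$, and $w=y+i$. The integrand becomes
\[
 \frac{(y+i)(y^2+2iy)^q}{4^q\,i\sinh(\pi y/2)} ,
\]
which explains the polynomial $P_q$. The vertical segment from $i$ to the line is degenerate, because the line starts at $w=i$ itself. The deformation between the original path (the segment $[0,i]$ on the imaginary axis joined with $[0,\infty)$) and the line $i+[0,\infty)$ is justified by Cauchy's theorem on the rectangle with corners $0,R,R+i,i$. No poles lie inside, since the first pole is at $w=i$, which is exactly the corner.

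The corner pole is the delicate point and the main obstacle. Near $y=0$, $\sinh(\pi y/2)\sim\pi y/2$ is cancelled because $(y^2+2iy)^q$ vanishes to order $q\ge1$, so the integrand is regular. Nevertheless the contour passes through a pole of $1/\cosh$, and this must be handled by indenting around $w=i$. The claimed cancellation is that $z^q$ vanishes there, so the indentation contributes nothing. The right edge vanishes by exponential decay. Taking the real part, with the $i$ factors arranged, leaves exactly $\operatorname{Im}\{(y+i)(y^2+2iy)^q\}=P_q(y)$ in the numerator. The constants collect as $2^q\pi/\Gamma(q)\cdot\tfrac12\cdot4^{-q}=\pi/(2^{q+1}\Gamma(q))$, matching \eqref{an:moment-integral}.

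The expansion in \eqref{an:moment-polynomial} is a routine binomial expansion of $(y^2+2iy)^q=y^q(y+2i)^q$. Only the even powers survive in the imaginary part, which shows that $P_q$ is even.

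For the zeta-value statement we use
\[
 \int_0^\infty \frac{y^{2m}}{\sinh(\pi y/2)}\,dy=\frac{2\,(2m)!\,(2^{2m+1}-1)}{\pi^{2m+1}}\cdot\frac{2^{2m+1}}{2^{2m+1}}\,\zeta(2m+1)\cdot(\text{rational}) .
\]
This comes from expanding $1/\sinh=2\sum_{k\ge0}e^{-(2k+1)\pi y/2}$, which gives $2(2m)!(2/\pi)^{2m+1}(1-2^{-2m-1})\zeta(2m+1)$. After the prefactor $\pi$, each term is a rational multiple of $\zeta(2m+1)/\pi^{2m}$ with $m\ge1$, since $\ell\le q/2$ gives $2(q-\ell)\ge q\ge1$ and the exponent is positive and even.
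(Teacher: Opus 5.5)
Your proposal is correct and follows the paper's proof essentially step for step. The common route is the gamma-integral representation $m_{q,\mathrm{an}}=2^q\Gamma(q)^{-1}\int_0^\infty z^{q-1}L_S(z)\,dz$, the substitution $w=\sqrt{4z-1}$ along the path $i\to0\to\infty$, a Cauchy shift to $w=y+i$ (valid because the zero of $\cosh(\pi w/2)$ at $w=i$ is cancelled by $(1+w^2)^q$), and the geometric expansion of $1/\sinh$ for the zeta values.

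Only cosmetic slips need fixing. The integrand $w(1+w^2)^q/\cosh(\pi w/2)$ is odd, not even, though you never use this. The zero of $\cosh(\pi w/2)$ at $w=i$ does lie on the line $\operatorname{Im}w=1$, but it is a removable singularity of the integrand, so no indentation is needed. Your first displayed zeta formula is garbled, although the computation $2(2m)!(2/\pi)^{2m+1}(1-2^{-2m-1})\zeta(2m+1)$ that follows it is right.
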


\begin{proof}
For every real $q>0$, the positive gamma integral and Tonelli's theorem
give
\begin{equation}
 m_{q,\mathrm{an}}
 =\frac{2^q}{\Gamma(q)}\int_0^\infty z^{q-1}L_S(z)\,dz.
 \label{an:gamma-integral}
\end{equation}
Formula \eqref{an:laplace} shows that this integral converges both at
zero and at infinity. Suppose now that $q$ is an integer and put
$x=\sqrt{4z-1}$. As $z$ runs from zero to infinity, $x$ follows the
path from $i$ down to zero and then along the positive real axis.
The constants in \eqref{an:gamma-integral} become
\begin{equation}
 m_{q,\mathrm{an}}
 =\frac\pi{2^{q+1}\Gamma(q)}
 \int_{i\to0\to\infty}
 \frac{x(x^2+1)^q}{\cosh(\pi x/2)}\,dx.
 \label{an:contour}
\end{equation}
The only zero of the denominator in the closed strip
$0\le\operatorname{Im}x\le1$ is $x=i$. It is cancelled by the
factor $(x^2+1)^q$. The integrand is therefore analytic there after
removing that singularity. On the vertical segment with real part $R$,
the integrand is bounded in absolute value by
$C_q(1+R)^{2q+1}e^{-\pi R/2}$, uniformly in its imaginary part.
Its integral therefore tends to zero as $R\to\infty$.
Cauchy's theorem consequently replaces the path in
\eqref{an:contour} by $x=y+i$, $y\ge0$.

Using $\cosh(\pi(y+i)/2)=i\sinh(\pi y/2)$ and taking real parts
gives \eqref{an:moment-integral}; expansion gives
\eqref{an:moment-polynomial}. Every monomial in $P_q$ has positive
even degree, so its integral converges separately at zero and at
infinity. For $j\ge1$, the positive geometric expansion of the
reciprocal hyperbolic sine gives
\begin{equation}
 \int_0^\infty\frac{y^{2j}}{\sinh(\pi y/2)}\,dy
 =\frac{2(2j)!(2^{2j+1}-1)}{\pi^{2j+1}}\zeta(2j+1).
 \label{an:zeta-integral}
\end{equation}
Substitution proves the final assertion.
\end{proof}

For the first four moments, \eqref{an:moment-polynomial} gives
\begin{align*}
 P_1(y)&=3y^2,& P_2(y)&=5y^4-4y^2,\\
 P_3(y)&=7y^6-20y^4,& P_4(y)&=9y^8-56y^6+16y^4.
\end{align*}
Substitution into \eqref{an:moment-integral} and
\eqref{an:zeta-integral} gives the evaluations in
\eqref{const:anchored-moments}.

\section{Absolute central moments}
\label{abs:section}

Let $\gamma_A$ be the skewness defined in
Lemma~\ref{def:absolute-third}. That lemma reduces the absolute third
central moment to an integral truncated at the mean. We evaluate both
models using three integration kernels and the exponential integral $E_1$.

\begin{lemma}[Three integration kernels]
\label{abs:elementary-kernels}
For $x>0$, define
\begin{align}
 R_0(x)&=\frac{(x^2+5x+2)e^{-x}-x(x^2+6x+6)E_1(x)}6,
 \notag\\
 R_1(x)&=\frac{(2-6x-7x^2-x^3)e^{-x}
                   +x^2(x+2)(x+6)E_1(x)}{24},
 \label{abs:kernels}\\
 R_E(x)&=\frac{(x^3+9x^2+18x+6)E_1(x)
                   -(x^2+8x+11)e^{-x}}{18}.
 \notag
\end{align}
Then
\begin{align*}
 R_j(x)&=\int_0^1u^j(1-u)^2e^{-x/u}\,du
                      \qquad(j=0,1),\\
 R_E(x)&=\int_0^1(1-u)^2E_1(x/u)\,du.
\end{align*}
In particular, these three kernels are positive, and
$R_0(x)\le e^{-x}/3$.
\end{lemma}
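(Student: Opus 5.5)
The plan is to verify each closed form by differentiation in $x$ and to match the values at $x\to\infty$ (or $x\to0^+$), rather than integrating $e^{-x/u}$ against polynomials directly. Write $I_j(x)=\int_0^1u^j(1-u)^2e^{-x/u}\,du$ for $j=0,1$, and $I_E(x)=\int_0^1(1-u)^2E_1(x/u)\,du$.

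First, all three integrals tend to zero as $x\to\infty$, by dominated convergence with majorant $(1-u)^2e^{-x}$, respectively $(1-u)^2e^{-x}/x$. The closed forms also tend to zero, using $E_1(x)=e^{-x}(1/x-1/x^2+2/x^3-6/x^4+\dots)$. The polynomial prefactors cancel only to the order displayed. For instance, in $R_0$,
\[
 x(x^2+6x+6)E_1(x)=e^{-x}\bigl(x^2+5x+2+O(x^{-1})\bigr),
\]
so $R_0(x)=O(e^{-x}/x)$. That is the consistency check I will run for each kernel.

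Next, differentiate in $x$. We have $I_j'(x)=-\int_0^1u^{j-1}(1-u)^2e^{-x/u}\,du$. The substitution $v=1/u$ turns this into an integral of $e^{-xv}$ against a rational function of $v$ on $[1,\infty)$, which is a finite combination of $e^{-x}\times$(polynomial in $1/x$) and $E_1(x)$. Explicitly, for $j=0$,
\[
 I_0'(x)=-\int_1^\infty \frac{(v-1)^2}{v^3}e^{-xv}\,dv=-E_1(x)+2E_2(x)-E_3(x),
\]
with $E_n$ the generalized exponential integrals, reducible to $e^{-x}$ and $E_1$ by recurrence. On the other side, differentiate the proposed $R_0$ using $E_1'=-e^{-x}/x$ and compare. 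The $j=1$ case is identical, with weight $v^{-4}$ instead of $v^{-3}$.

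For $R_E$, I will note $\partial_x E_1(x/u)=-e^{-x/u}/x$, so
\[
 I_E'(x)=-\frac{1}{x}I_0(x)=-\frac{R_0(x)}{x}.
\]
The check is then that the derivative of the stated $R_E$ equals $-R_0/x$. This reduces to matching polynomial coefficients of $e^{-x}$ and of $E_1$, again using $E_1'=-e^{-x}/x$. Equal derivatives plus equal limits at infinity give equality.

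Positivity is then immediate from the integral representations, since the integrands are positive. For the bound, $e^{-x/u}\le e^{-x}$ on $(0,1]$ gives $R_0(x)\le e^{-x}\int_0^1(1-u)^2\,du=e^{-x}/3$.

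The main obstacle is purely bookkeeping: the coefficient matching in the derivative identities, especially for $R_E$. There, cubic polynomial factors multiply $E_1$, and the $e^{-x}$ terms from differentiating both $E_1$ and $e^{-x}$ must cancel exactly. I would organize this by writing every candidate as $p(x)e^{-x}+r(x)E_1(x)$. Then
\[
 \frac{d}{dx}\bigl(pe^{-x}+rE_1\bigr)=\Bigl(p'-p-\frac{r}{x}\Bigr)e^{-x}+r'E_1,
\]
and the comparison becomes two polynomial identities per kernel.
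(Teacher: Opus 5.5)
Your proposal is correct, and the coefficient matching you deferred does go through. Using $E_2=e^{-x}-xE_1$ and $E_3=\tfrac12(e^{-x}-xe^{-x}+x^2E_1)$, one gets $I_0'=\tfrac{x+3}{2}e^{-x}-\bigl(1+2x+\tfrac{x^2}{2}\bigr)E_1$, which is exactly the derivative of the stated $R_0$. The same computation gives $R_1'=-R_0$ and $R_E'=-R_0/x$. The closed forms vanish at infinity simply because $E_1(x)\le e^{-x}/x$, so the asymptotic cancellation check you plan is unnecessary.

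The paper argues constructively rather than by verification. It sets $H_j(x)=\int_0^1u^je^{-x/u}\,du$ for $j\ge-1$, which by your own substitution $v=1/u$ is just $E_{j+2}$. It then runs the recurrence $H_j=(e^{-x}-xH_{j-1})/(j+1)$ from $H_{-1}=E_1$ and expands $R_j=H_j-2H_{j+1}+H_{j+2}$ directly. For $R_E$ it integrates by parts in $u$, giving $\tfrac13(H_{-1}-3H_0+3H_1-H_2)$.

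That route computes the integrals outright. It needs neither differentiation under the integral sign nor a boundary condition at infinity, and it would produce the formulas if they were not given. Your derivative-plus-limit argument presupposes the closed forms and costs about the same algebra, since $I_0'$ must still be reduced with the same $E_n$ recurrence. In exchange, it yields the relations $I_1'=-I_0$ and $I_E'=-I_0/x$, which link the three kernels neatly; your $I_E'=-I_0/x$ replaces the paper's integration by parts.

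You could also skip the ODE step for $I_0$ entirely. Your substitution already gives $I_0=E_2-2E_3+E_4$, which is precisely the paper's computation.
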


\begin{proof}
Write $H_j(x)=\int_0^1u^j e^{-x/u}\,du$, for $j\ge-1$.
The substitution $v=x/u$ gives $H_{-1}(x)=E_1(x)$, and
integration of the derivative of $u^{j+1}e^{-x/u}$ gives
\[
 H_j(x)=\frac{e^{-x}-xH_{j-1}(x)}{j+1}\qquad(j\ge0).
\]
Thus $R_j=H_j-2H_{j+1}+H_{j+2}$ for $j=0,1$.
Since $dE_1(x/u)/du=e^{-x/u}/u$, integration by parts gives
\[
 R_E(x)=\tfrac13\{H_{-1}(x)-3H_0(x)+3H_1(x)-H_2(x)\}.
\]
The recursion yields \eqref{abs:kernels}. The bound on $R_0$
follows from $e^{-x/u}\le e^{-x}$ on $0<u\le1$.
\end{proof}

\begin{theorem}[Exact series for the absolute third moments]
\label{abs:series}
For $A\in\{\mathrm{opt},\mathrm{an}\}$,
\begin{equation}
 \beta_A=\gamma_A-\frac{6\kappa_A^3}{V_A^{3/2}}
                  \sum_{m=1}^\infty\mathcal I_{m,A},
 \label{abs:beta-series}
\end{equation}
where the series converges absolutely.

For the unrestricted case, write the coefficients in
\eqref{const:image-coefficients} as
$A_m(t)=A_m^{(0)}+tA_m^{(1)}$ and
$B_m(t)=B_m^{(0)}+tB_m^{(1)}$, and put
$x_m=8m^2/\kopt$, $y_m=8m(m+1)/\kopt$. Then
\begin{align}
 \mathcal I_{m,\mathrm{opt}}
 &=A_m^{(0)}R_0(x_m)+\kopt A_m^{(1)}R_1(x_m)
     +B_m^{(0)}R_0(y_m)+\kopt B_m^{(1)}R_1(y_m)
 \notag\\
 &\quad+4m^2R_E(x_m)-4m(m+1)R_E(y_m).
 \label{abs:opt-terms}
\end{align}
For the anchored case,
\begin{equation}
 \mathcal I_{m,\mathrm{an}}
 =(-1)^m(2m+1)R_0\!\left(\frac{2m(m+1)}{\kan}\right).
 \label{abs:an-terms}
\end{equation}
\end{theorem}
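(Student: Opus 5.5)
Starting point: formula \eqref{def:beta-survival} of Lemma~\ref{def:absolute-third}. Since $\gamma_A V_A^{3/2}=m_{3,A}-3\kappa_A m_{2,A}+2\kappa_A^3$, it reads
\[
 \beta_A=\gamma_A+\frac{2\kappa_A^3-6\int_0^{\kappa_A}(\kappa_A-t)^2S_A(t)\,dt}{V_A^{3/2}}.
\]
Substituting $t=\kappa_A u$ gives
\[
 \int_0^{\kappa_A}(\kappa_A-t)^2S_A(t)\,dt=\kappa_A^3\int_0^1(1-u)^2S_A(\kappa_A u)\,du.
\]
The constant term $1$ of the survival series contributes $\kappa_A^3/3$. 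Multiplied by $-6$, this cancels the $2\kappa_A^3$ exactly. So the whole task reduces to writing $S_A(\kappa_A u)-1$ as a series $\sum_m$ and identifying $\int_0^1(1-u)^2[\cdot]\,du$ termwise with $\mathcal I_{m,A}$.

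For the unrestricted case I use Proposition~\ref{const:optimal-image-series}, which gives $S_{\mathrm{opt}}(t)-1=\sum_m\Psi_m(t)$ with uniform absolute convergence on $0<t\le\kopt$. This permits termwise integration against $(1-u)^2$ on $[0,1]$. Put $t=\kopt u$, so that $8m^2/t=x_m/u$ and $8m(m+1)/t=y_m/u$. The terms transform as follows:
\begin{itemize}
\item $A_m^{(0)}e^{-x_m/u}$ gives $A_m^{(0)}R_0(x_m)$;
\item $tA_m^{(1)}e^{-x_m/u}=\kopt A_m^{(1)}\,u\,e^{-x_m/u}$ gives $\kopt A_m^{(1)}R_1(x_m)$;
\item the $B_m$ terms behave the same way with $y_m$;
\item the exponential-integral terms give $4m^2R_E(x_m)-4m(m+1)R_E(y_m)$.
\end{itemize}
Each identification is an integral representation from Lemma~\ref{abs:elementary-kernels}. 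This yields \eqref{abs:opt-terms}.

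Absolute convergence of $\sum_m\mathcal I_{m,\mathrm{opt}}$ follows from the bound $C(1+m^2)e^{-8m^2/t_0}$ with $t_0=\kopt$ used in the proof of that proposition. Alternatively, bound each kernel directly: $R_0(x),R_1(x)\le e^{-x}/3$, and $R_E(x)\le E_1(x)/3\le e^{-x}/(3x)$, because $E_1$ is decreasing and $x/u\ge x$. Against these, the polynomial coefficients in $m$ are harmless, since $x_m\asymp m^2$.

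For the anchored case, Theorem~\ref{an:distribution} together with \eqref{an:sum-cdf} and \eqref{an:inverse-sum} gives
\[
 S_{\mathrm{an}}(t)=\Pbb(S\le2/t)=\sum_{n\ge0}(-1)^n(2n+1)e^{-2n(n+1)/t}.
\]
The $n=0$ term is $1$. At $t=\kan u$ the remaining terms are $(-1)^m(2m+1)e^{-(2m(m+1)/\kan)/u}$, and integrating each against $(1-u)^2$ gives $(-1)^m(2m+1)R_0(2m(m+1)/\kan)$.

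The main obstacle is interchanging sum and integral near $u=0$, where the theta series converges slowly in $t$. This is actually fine: on $0<t\le\kan$ the terms are dominated by $(2m+1)e^{-2m(m+1)/\kan}$, which is summable uniformly in $u$. The same bound, via $R_0(x)\le e^{-x}/3$, gives absolute convergence of $\sum_m\mathcal I_{m,\mathrm{an}}$.

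The only real care needed is the sign bookkeeping, namely that $\beta_A=\gamma_A-6\kappa_A^3V_A^{-3/2}\sum_m\mathcal I_{m,A}$ after the cancellation of $2\kappa_A^3$ against the constant term. The kernel values themselves are taken from Lemma~\ref{abs:elementary-kernels}.
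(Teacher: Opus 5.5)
Your proposal is correct and follows the paper's proof. The paper likewise rewrites Lemma~\ref{def:absolute-third} as $\beta_A=\gamma_A+6V_A^{-3/2}\int_0^{\kappa_A}(\kappa_A-t)^2\{1-S_A(t)\}\,dt$, which is equivalent to your cancellation of $2\kappa_A^3$ against the constant term $1$. It then inserts the image series or the theta series, sets $t=\kappa_A u$, identifies the terms via Lemma~\ref{abs:elementary-kernels}, and uses uniform absolute convergence on $(0,\kappa_A]$ to justify termwise integration; your explicit kernel bounds for the absolute convergence of $\sum_m\mathcal I_{m,A}$ only spell out what the paper leaves implicit.
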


\begin{proof}
Lemma~\ref{def:absolute-third} can be written as
\[
 \beta_A=\gamma_A+\frac6{V_A^{3/2}}
                \int_0^{\kappa_A}(\kappa_A-t)^2\{1-S_A(t)\}\,dt.
\]
For the unrestricted case, insert
\eqref{const:optimal-survival-series}, set $t=\kopt u$, and use
Lemma~\ref{abs:elementary-kernels}. For the anchored case,
\eqref{an:sum-cdf} and $\tauan\stackrel d=2/S$ give
\[
 S_{\mathrm{an}}(t)
 =1+\sum_{m\ge1}(-1)^m(2m+1)e^{-2m(m+1)/t}.
\]
This series converges absolutely and uniformly on
$0<t\le\kan$, with its continuous extension at zero.
The same substitution therefore gives \eqref{abs:an-terms}.
The uniform absolute convergence in both cases justifies
termwise integration.
\end{proof}

\begin{proposition}[Explicit truncation bounds]
\label{abs:truncation}
Let $\beta_{A,N}$ denote \eqref{abs:beta-series} truncated at $m=N$.
For the unrestricted case, put
$\kappa=\kopt$, $V=V_{\mathrm{opt}}$, $M=N+1$,
$r=e^{-8/\kappa}$ and $p=r^{2M+1}$. Then
\begin{equation}
 |\beta_{\mathrm{opt}}-\beta_{\mathrm{opt},N}|
 \le\frac{54\kappa^3}{V^{3/2}}r^{M^2}
 \left\{\frac{M^2}{1-p}+\frac{2Mp}{(1-p)^2}
                     +\frac{p(1+p)}{(1-p)^3}\right\}.
 \label{abs:opt-error}
\end{equation}
For the anchored case, put $\kappa=\kan$, $V=V_{\mathrm{an}}$,
and $x_N=2(N+1)(N+2)/\kappa$. For $N\ge1$,
\begin{equation}
 0\le(-1)^N(\beta_{\mathrm{an}}-\beta_{\mathrm{an},N})
 \le\frac{6\kappa^3}{V^{3/2}}(2N+3)R_0(x_N)
 \le\frac{2\kappa^3}{V^{3/2}}(2N+3)e^{-x_N}.
 \label{abs:an-error}
\end{equation}
In particular, $N=5$ in \eqref{abs:opt-error} gives a bound below
$1.2\times10^{-31}$, while $N=9$ in \eqref{abs:an-error} gives a bound
below $1.5\times10^{-35}$.
\end{proposition}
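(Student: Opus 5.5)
The remainder after truncation at $m=N$ is $-6\kappa_A^3V_A^{-3/2}\sum_{m>N}\mathcal I_{m,A}$ by \eqref{abs:beta-series}. So in both cases the task is to bound the tail of $\sum_m\mathcal I_{m,A}$. The anchored case is an alternating-series argument; the unrestricted case is a crude termwise bound summed as a Gaussian-type series.

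\emph{Anchored case.} By \eqref{abs:an-terms}, $\mathcal I_{m,\mathrm{an}}=(-1)^mc_m$ with $c_m=(2m+1)R_0(x_m')$ and $x_m'=2m(m+1)/\kan$. I will show that $c_m$ is positive and decreasing for $m\ge2$. By Lemma~\ref{abs:elementary-kernels}, $R_0(x)=\int_0^1(1-u)^2e^{-x/u}du$. The ratio $c_{m+1}/c_m$ is at most $\frac{2m+3}{2m+1}\sup_u e^{-(x_{m+1}'-x_m')/u}\le\frac{2m+3}{2m+1}e^{-4(m+1)/\kan}$. For $m\ge2$ this is clearly less than $1$, since $e^{-12/2.56}$ is tiny. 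The tail $\sum_{m>N}(-1)^mc_m$ then has the sign of $(-1)^{N+1}$ and absolute value at most $c_{N+1}$. Multiplying by $-6\kappa^3V^{-3/2}$ gives $0\le(-1)^N(\beta_{\mathrm{an}}-\beta_{\mathrm{an},N})\le6\kappa^3V^{-3/2}(2N+3)R_0(x_N)$. The last inequality in \eqref{abs:an-error} is the bound $R_0\le e^{-x}/3$ from Lemma~\ref{abs:elementary-kernels}.

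\emph{Unrestricted case.} Here I go back to the integral form instead of bounding \eqref{abs:opt-terms} term by term:
\[
\mathcal I_{m,\mathrm{opt}}=\int_0^1(1-u)^2\Psi_m(\kappa u)\,du .
\]
It then suffices to show $|\Psi_m(t)|\le 9m^2e^{-8m^2/t}$, or a comparable bound, on $0<t\le\kappa$, since $\int_0^1(1-u)^2du=1/3$ and this yields the factor $6\cdot9/1=54$. To get the bound on $\Psi_m$, I use $E_1(x)\le e^{-x}/x$ together with $e^{-8m(m+1)/t}\le e^{-8m^2/t}$. This gives $|4m^2E_1(8m^2/t)|\le t/2$, and that term, along with the linear-in-$t$ parts of $A_m,B_m$, is bounded using $t\le\kappa<4$. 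The dominant constant parts are $\frac{8(2m^2+1)}3$ and $\frac{(2m+1)^4}{3m(m+1)}$, and their exponentials should partially cancel. This constant-tracking to reach the advertised $9m^2$ (equivalently $54=6\cdot 9$) is the main obstacle. If the naive sum exceeds $9m^2$, I would use the two-term difference $A_m^{(0)}e^{-x}+B_m^{(0)}e^{-y}$ with $y-x=8m/t\ge 2m$, which makes the $B$ term negligible relative to $e^{-x}$.

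Granting this, on $0<t\le\kappa$ we have $e^{-8m^2/t}\le e^{-8m^2/\kappa}=r^{m^2}$. The tail is therefore bounded by $\frac{54\kappa^3}{V^{3/2}}\sum_{m\ge M}m^2r^{m^2}$, up to the factor bookkeeping. Write $m=M+j$, so that $r^{m^2}\le r^{M^2}r^{(2M+1)j}=r^{M^2}p^j$ for $j\ge0$, using $j^2\ge j$. The sum $\sum_{j\ge0}(M+j)^2p^j$ is exactly $\frac{M^2}{1-p}+\frac{2Mp}{(1-p)^2}+\frac{p(1+p)}{(1-p)^3}$, which is \eqref{abs:opt-error}. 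The numerical claims then follow by plugging in the values from Table~\ref{tab:constants}: $N=5$ with $r^{36}$, and $N=9$ with $x_9=220/\kan$.
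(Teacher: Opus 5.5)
Your plan follows the paper's proof. It uses the same remainder $-6\kappa^3V^{-3/2}\sum_{m>N}\mathcal I_{m,A}$, the same integral form $\mathcal I_{m,\mathrm{opt}}=\int_0^1(1-u)^2\Psi_m(\kappa u)\,du$, and the same substitution $m=M+j$ with $m^2\ge M^2+(2M+1)j$. In the anchored case it uses the same ratio bound from $R_0(x+\delta)\le e^{-\delta}R_0(x)$, followed by the alternating-series remainder. The anchored half is complete as written. Proving that $c_m$ decreases only from $m=2$ on is enough, because for $N\ge1$ the tail starts at $m=N+1\ge2$.

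The unrestricted half has a gap, caused by a factor-of-three slip.

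\textbf{The slip.} Since $\int_0^1(1-u)^2\,du=\tfrac13$, a bound $|\Psi_m(t)|\le Km^2e^{-8m^2/\kappa}$ on $(0,\kappa]$ gives $|\mathcal I_{m,\mathrm{opt}}|\le\tfrac K3m^2r^{m^2}$, hence the prefactor $6\cdot\tfrac K3=2K$. So the constant $54$ corresponds to $K=27$, that is, $|\mathcal I_{m,\mathrm{opt}}|\le9m^2r^{m^2}$. Your target $K=9$ would give $18$, not $6\cdot9=54$.

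\textbf{The gap.} Because you aimed at the wrong constant, you leave the pointwise estimate unproven (``Granting this'') and treat it as the main obstacle. With the correct target it is immediate and needs no cancellation. Use $e^{-8m(m+1)/t}\le e^{-8m^2/t}$ and $E_1(x)\le e^{-x}/x$. Then $\tfrac{8(2m^2+1)}3\le8m^2$, and $\tfrac{(2m+1)^4}{3m(m+1)}\le\tfrac{27}2m^2$ with equality at $m=1$. Each linear-in-$t$ part contributes at most $\tfrac t6e^{-8m^2/t}$, and each $E_1$ term at most $\tfrac t2e^{-8m^2/t}$. Hence $|\Psi_m(t)|\le(\tfrac{43}2m^2+\tfrac{4t}3)e^{-8m^2/t}\le27m^2e^{-8m^2/\kappa}$ for $0<t\le\kappa<4$. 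This is exactly the paper's estimate, and inserting it closes your argument.

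\textbf{On your fallback.} Your stronger bound with $K=9$ is in fact true, but the fallback you describe would not reach it. At $m=1$ and $t=\kappa$, already $|A_1|=8$ plus $4e^{8/t}E_1(8/t)\approx1.3$ exceeds $9$. Making the $B$ term small is therefore not enough; you would need the signs, namely that $A_m<0$ while $4m^2E_1(8m^2/t)>0$. That route improves $54$ to $18$, but the statement does not need it.
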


\begin{proof}
For $0<t\le\kopt<4$, the coefficients in
\eqref{const:image-coefficients} and $E_1(x)\le e^{-x}/x$ give
\[
 |\Psi_m(t)|\le
 \left(\frac{43}{2}m^2+\frac{4\kopt}{3}\right)e^{-8m^2/\kopt}
 \le27m^2e^{-8m^2/\kopt}.
\]
Therefore $|\mathcal I_{m,\mathrm{opt}}|\le9m^2r^{m^2}$.
For $m=M+j$, use $m^2\ge M^2+(2M+1)j$ and sum
$(M+j)^2p^j$. This proves \eqref{abs:opt-error}.

For the anchored case, set $c_m=(2m+1)R_0(2m(m+1)/\kan)>0$.
The integral for $R_0$ gives
\[
 \frac{c_{m+1}}{c_m}
 \le\frac{2m+3}{2m+1}e^{-4(m+1)/\kan}<1
 \qquad(m\ge1),
\]
because $\kan<3$. Hence the terms alternate with decreasing
absolute values. The alternating-series remainder estimate and
$R_0(x)\le e^{-x}/3$ give \eqref{abs:an-error}.
\end{proof}

Evaluation at 90-digit working precision gives
\[
 \beta_{\mathrm{opt}}\approx1.86013967989860,
 \qquad
 \beta_{\mathrm{an}}\approx1.94285545926864.
\]
Each summand is a finite combination of elementary functions and
$E_1$; the full expressions remain infinite series. No finite
closed form for either $\beta_A$ is established here.
The displayed error bounds control series truncation, not
floating-point rounding.

\paragraph{Use of computational tools.}
OpenAI Codex was used to assist with mathematical derivations, symbolic and
numerical checks, and preparation of the manuscript.
\appendix
\section{The ordered-extremum density}
\label{app:ordered-density}

\begin{proof}[Proof of Proposition~\ref{opt:ordered-density-proposition}]
We derive the density by two boundary variations and then identify the
time variables.  This also fixes the factor $1/4$ without conditioning
directly on an absorbing boundary.

Choose smooth functions $a,b$ supported in a compact subset of $(0,1)$,
and replace the interval at time $t$ by
\[
 (l-\eta b(t),\ h+\epsilon a(t)).
\]
For sufficiently small real $\epsilon,\eta$, this is a nondegenerate
interval containing the fixed initial and terminal positions at times
zero and one.  Denote the resulting killed transition density, started
at zero, by $u^{\epsilon,\eta}(t,z)$.  Its parameter derivatives at zero
may be obtained by differentiating the heat equation in the interior
and its Dirichlet boundary data.  Here is a justification in this
particular setting.  Map the moving interval to $(0,1)$ by
\[
 z=l-\eta b(t)+L(t)y,\qquad
 L(t)=h-l+\epsilon a(t)+\eta b(t).
\]
The transformed equation has second-order coefficient $1/(2L(t)^2)$
and first-order coefficient
$[-\eta b'(t)+L'(t)y]/L(t)$.  They are smooth in the parameters,
uniformly parabolic near zero, and equal the unperturbed coefficients
near the initial time.  Starting at a positive time before the support
of $a,b$, the initial kernel is smooth.  The Duhamel formula on the fixed
interval, or its successive difference quotients, then yields first
and mixed second parameter derivatives, with the differentiated
equations and boundary conditions below.  The compact time support
avoids any differentiation of the point initial data.

For the fixed interval, the solution with zero initial data and
boundary values $q_l(t),q_h(t)$ is
\begin{equation}\label{opt:heat-poisson}
 \frac12\int_0^t
 \{q_l(s)A_l(t-s,z)+q_h(s)A_h(t-s,z)\}\,ds.
\end{equation}
This is Green's formula for the heat equation: the spatial integration
by parts contributes $1/2$, the coefficient of $\partial_z^2$ in the
generator.  Differentiating the upper boundary condition gives, for
$U=\partial_\epsilon u^{\epsilon,0}|_0$,
\[
 U(t,h)=-a(t)\partial_zK_{l,h}(t;0,h)
        =a(t)A_h(t,0),\qquad U(t,l)=0.
\]
The analogous lower derivative $V=\partial_\eta u^{0,\eta}|_0$ has
$V(t,l)=b(t)A_l(t,0)$ and $V(t,h)=0$.  In particular,
\begin{equation}\label{opt:first-shape-derivative}
 U(1,w)=\frac12\int_0^1
  a(t)A_h(t,0)A_h(1-t,w)\,dt.
\end{equation}

Let $H=\partial_\epsilon\partial_\eta
u^{\epsilon,\eta}|_0$.  The upper boundary value of $H$ is
$-a(t)\partial_zV(t,h)$, and the lower boundary value is
$b(t)\partial_zU(t,l)$.  Substitute \eqref{opt:heat-poisson} for $U,V$
and differentiate the kernel at the opposite boundary.  A second use
of \eqref{opt:heat-poisson} gives
\begin{align}
 H(1,w)=\frac14\int_{0<u<v<1}
 &\bigl[b(u)a(v)A_l(u,0)C_{lh}(v-u)A_h(1-v,w)
 \nonumber\\[-2pt]
 &\quad+a(u)b(v)A_h(u,0)C_{hl}(v-u)A_l(1-v,w)\bigr]
 \,du\,dv.
 \label{opt:mixed-shape-derivative}
\end{align}
Each variation acts on a different endpoint; there is no term involving
a second displacement of the same boundary.  Each use of the Poisson
formula contributes one factor $1/2$.

It remains to establish that $u,v$ in
\eqref{opt:mixed-shape-derivative} are the times of the global extrema.
The survival event for the moving boundaries is
\[
 m_\eta:=\min_t\{W_t+\eta b(t)\}>l,
 \qquad M_\epsilon:=\max_t\{W_t-\epsilon a(t)\}<h.
\]
Almost sure uniqueness of the unperturbed extrema implies
\[
 \left.\frac{d}{d\eta}m_\eta\right|_0=b(T_-),
 \qquad
 \left.\frac{d}{d\epsilon}M_\epsilon\right|_0=-a(T_+).
\]
To justify differentiation of the event, first integrate its indicator
against a smooth, compactly supported test function $\psi(l,h)$ and
against a bounded test function of $W_1$.  The integration in the levels
is
\[
 J(m_\eta,M_\epsilon)
 =\int_{l<m_\eta}\int_{h>M_\epsilon}\psi(l,h)\,dh\,dl.
\]
Its mixed difference quotient is bounded by
$\|\psi\|_\infty\|a\|_\infty\|b\|_\infty$, because the two extrema
are Lipschitz in their respective parameters.  Since $J_{mM}=-\psi$,
the mixed derivative is
$a(T_+)b(T_-)\psi(\min W,\max W)$.  Dominated convergence is therefore
available before taking any level density.  Comparing this result with
\eqref{opt:mixed-shape-derivative}, for arbitrary level, endpoint, and
time test functions, identifies the joint measure.  Products $b(u)a(v)$
of smooth time test functions determine its restriction to $u<v$ and
yield \eqref{opt:ordered-extrema-density}.  The opposite order is the
second term.  The uniqueness and interior location of the extrema,
proved above, exclude a missing diagonal or time-endpoint component.
\end{proof}

\bibliographystyle{plain}
\bibliography{references}
\end{document}